\documentclass[11pt, reqno]{amsart}
\usepackage{amsmath, amsthm, amscd, amsfonts, amssymb, graphicx, color}
\usepackage[bookmarksnumbered, colorlinks, plainpages]{hyperref}
\hypersetup{colorlinks=true,linkcolor=red, anchorcolor=green, citecolor=cyan, urlcolor=red, filecolor=magenta, pdftoolbar=true}

\newtheorem{theorem}{Theorem}[section]
\newtheorem{lemma}[theorem]{Lemma}
\newtheorem{proposition}[theorem]{Proposition}
\newtheorem{cor}[theorem]{Corollary}

\newtheorem{question}[theorem]{Question}

\theoremstyle{remark}
\newtheorem{remark}[theorem]{\bf{Remark}}
\numberwithin{equation}{section}
\usepackage{multirow}
\usepackage{makecell}
\begin{document}
	\title [Weighted composition operators]{Weighted composition operators on weighted Fock spaces} 
	
	\author[S. Barik, A. Sen and K. Paul]{Somdatta Barik, Anirban Sen and Kallol paul}
	\address[Barik]{Department of Mathematics, Jadavpur University, Kolkata 700032, West Bengal, India}
	\email{bariksomdatta97@gmail.com}

	\address[Sen] {Mathematical Institute, Silesian University in Opava, Na Rybn\'{\i}\v{c}ku 1, 74601 Opava, Czech Republic}
	\email{anirbansenfulia@gmail.com; Anirban.Sen@math.slu.cz}

	\address[Paul] {Vice-Chancellor\\
		Kalyani University\\
		West Bengal 741235 \\and 
		Professor (on lien)\\ Department of mathematics\\ Jadavpur University\\Kolkata 700032\\West Bengal\\India}
	\email{kalloldada@gmail.com}
	
	\subjclass[2020]{Primary: 47B38; Secondary: 47B33, 30D15}
	
	\keywords{Weighted Fock spaces, weighted composition operators, boundedness, compactness, essential norm, Hilbert-Schmidt operator}
	\begin{abstract}
		We provide a complete characterization of the bounded, compact, and Hilbert–Schmidt class weighted composition operators acting on weighted Fock spaces, extending the classical Fock space characterization due to Le \cite{LBLMS_14}. An estimate for the essential norm of these operators is also provided. Our approach relies on the asymptotic behavior of the Mittag-Leffler function. As applications, we establish explicit criteria for weighted composition operators with exponential weights and recover corresponding results for composition operators.
	\end{abstract}
	\maketitle	
	\tableofcontents

	\section{Introduction}

	Let $\Omega$ be a domain, $\psi$ a holomorphic function on $\Omega$ and $\phi$ a holomorphic self-map of $\Omega.$ The weighted composition operator on the space of all holomorphic functions on $\Omega$ is defined by 
	$$C_{\psi, \phi}f=\psi \cdot (f \circ \phi).$$ 
	Such operator can be viewed as products of multiplication operator $M_{\psi}$ and composition operator $C_{\phi}$. Composition and weighted composition operators have attracted considerable attention in recent years and have been studied extensively on various spaces of holomorphic functions (see, e.g., \cite{BCM_ACM_03,CGP_MA_2015,CR_JLMS_2004,G_PAMS_2008,T_TAMS_2003}). A major objective is to relate the analytic properties of the symbols $\psi$ and $\phi$ to the operator-theoretic properties of $C_{\psi,\phi},$ such as boundedness, compactness. 
	
	In the context of Fock spaces, Ueki \cite{Ueki_P_07} characterized the boundedness and compactness of weighted composition operators. He proved that $C_{\psi, \phi}$ is bounded on $F^2(\mathbb C)$ if and only if the
	integral transform $ B_\phi(|\psi|^2
	)$ is bounded. Likewise, $C_{\psi, \phi}$ is compact if and only if  $\lim\limits_{|z|\to \infty} \sup B_\phi(|\psi|^2
	)(z)=0.$ While this criterion is theoretically satisfactory, it is difficult to check in practice. Subsequently, Le \cite{LBLMS_14} derived a more explicit criterion by proving that $C_{\psi, \phi}$ is bounded if and only if $\psi \in F^2(\mathbb C),$ $\phi(z)=\phi(0)+\lambda z$ with $|\lambda|\leq 1,$ and $$\sup\limits_{z\in\mathbb C}{|\psi(z)|^2e^{|\phi(z)|^2-|z|^2}}<\infty.$$ Furthermore, he established that $C_{\psi, \phi}$ is compact if and only if $\psi \in F^2(\mathbb C),$ $\phi(z)=\phi(0)+\lambda z$ with $|\lambda|< 1,$ and $$\lim\limits_{|z|\to\infty}|\psi(z)|^2 e^{\alpha(|\phi(z)|^2-|z|^2)}= 0.$$
	For further results concerning weighted composition operators on different Fock spaces, including their boundedness, compactness, Hilbert--Schmidt property, and other operator-theoretic characterizations, we refer the reader to \cite{CG_JMAA_21, L_JOT_17,TK_PA_19,T_AM_2013, U_IJMA_07, WCZ_IEOP_13}.

	Herein, we focus our study on weighted composition operators acting on weighted Fock spaces $F^2_{\alpha, m}(\mathbb C),$ a class of spaces recently introduced in \cite{BFY_AM_2026}.
	For $m>0$ and $\alpha>0,$ the weighted Fock space $F^2_{\alpha, m}(\mathbb C)$ consists of all entire functions on $\mathbb C$ that are square integrable with respect to the measure 
	$$d\mu_{\alpha, m}(z)=\frac{m\alpha^{\frac 2m}}{2\pi \Gamma(\frac 2m)}e^{-\alpha |z|^m} dA(z),$$
	where $dA$ is the Lebesgue measure on $\mathbb C$ and $\Gamma$ is the Euler gamma function. We note that in the particular case $m=2$ and $\alpha=1$ this space reduces to the classical Fock space $F^2(\mathbb C),$ also known as classical Segal-Bargmann space. 
	%Suppose $m>0$ and for each $\alpha>0,$ we consider the measure on the complex plane $\mathbb C$ is $$d\mu_{\alpha, m}(z)=\frac{m\alpha^{\frac 2m}}{2\pi \Gamma(\frac 2m)}e^{-\alpha |z|^m} dA(z),$$ where $dA$ is the Lebesgue measure on the complex plane $\mathbb C$ and $\Gamma$ is the Euler gamma function. The weighted Fock space $F^2_{\alpha, m}(\mathbb C),$ consists of all entire functions on $\mathbb C$ that are square integrable with respect to the measure $\mu_{\alpha, m}$. 
	The inner product on $F^2_{\alpha, m}(\mathbb C)$ is given by 
	$$ \langle f,g \rangle_{\alpha,m}=\int_{\mathbb C}f(z)\overline{g(z)}d\mu_{\alpha, m}(z)=\frac{m\alpha^{\frac 2m}}{2\pi \Gamma(\frac 2m)}\int_{\mathbb C}f(z)\overline{g(z)}e^{-\alpha |z|^m} dA(z).$$
	We denote the corresponding norm by $\|\cdot\|_{\alpha,m}$. 
	A direct computation gives $$ \|z^n\|_{\alpha, m}=\sqrt{\frac{\Gamma(\frac{2n+2}{m})}{\alpha^{\frac{2n}{m}}\Gamma(\frac 2m)}}$$ and $\langle z^n,z^k\rangle_{\alpha, m}=0$ for $n\neq k.$ Consequently, the functions  
	$$e_n(z)=\sqrt{\frac{\alpha^{\frac{2n}{m}}\Gamma(\frac 2m)}{\Gamma{(\frac{2n+2}{m})}}}z^n,\,\,n=0,1,2,\ldots$$ 
	form an orthonormal set in $F^2_{\alpha, m}(\mathbb C).$
	Following an argument analogous to that for the classical Fock space, one easily verifies that this set is in fact an orthonormal basis for the space.
	It is well known that $F^2_{\alpha, m}(\mathbb C)$ is a reproducing kernel Hilbert space. Its reproducing kernel at the point $z$ is given by
	$$k^{\alpha,m}_z(w)= \Gamma\left({\frac 2m}\right)\sum\limits_{n=0}^{\infty}\frac{(\alpha^{\frac2m}z\overline{w})^n}{\Gamma(\frac{2n+2}{m})}.$$ From this expression, the norm of the kernel function can be computed as
	$$\|k^{\alpha,m}_z\|_{\alpha,m}=\sqrt{\Gamma\left({\frac 2m}\right)\sum\limits_{n=0}^{\infty}\frac{(\alpha^{\frac2m}|z|^2)^n}{\Gamma(\frac{2n+2}{m})}}.$$
	As with the classical Fock space $F^2(\mathbb C),$ a fundamental property of the weighted Fock space $F^2_{\alpha,m}(\mathbb C)$  is the pointwise decay of its elements: every function $f\in F^2_{\alpha,m}(\mathbb C)$ satisfies
	\begin{eqnarray}\label{eq_9898}
		f(z)e^{-\frac{\alpha}{2}|z|^m}\to 0\,\,\text{as}\,\, |z|\to\infty.
	\end{eqnarray}
	This decay property plays a crucial role in the function theory of these spaces and mirrors the well-known behavior of the classical Fock space.
	For a comprehensive treatment of the Fock space over $\mathbb C,$ we refer the reader to the monograph \cite{Zhu_Fock_Book}.

	Our main goal is to characterize the bounded and compact weighted composition operators on the weighted Fock spaces $F^2_{\alpha, m}(\mathbb C)$, a setting in which, for $m=2$ and $\alpha=1$ our characterization recovers the classical Fock space result established in \cite{LBLMS_14}. However, the techniques employed in that classical setting are no longer applicable here, and new tools are required to obtain the corresponding results in this more general framework. Chief among these is the Mittag-Leffler function, whose asymptotic behaviour plays a central role in our proofs.
	Recall that the two-parameter Mittag–Leffler function is defined by
	\[
	E_{\alpha,\beta}(z):=\sum_{n=0}^{\infty}
	\frac{z^n}{\Gamma(\alpha n+\beta)},
	\qquad \alpha>0,\ \beta\in\mathbb{R},
	\] 
	(see \cite{HMS_11}).
	According to \cite{HMS_11}, this function admits the following asymptotic expansion. Let $\alpha \in (0,2)$, $\beta \in \mathbb R,$ and set $N^\ast=\lfloor \frac{\beta}{\alpha} \rfloor.$ If $\frac{\pi \alpha}{2} <\mu <\min\{\pi,\pi\alpha\}$ then 
	\begin{eqnarray}\label{EX1}
		E_{\alpha,\beta}(z)=\frac{1}{\alpha}z^{\frac{(1-\beta)}{\alpha}}e^{z^{\frac{1}{\alpha}}}-\sum\limits_{r=1}^{N^\ast}\frac{1}{\Gamma(\beta-\alpha r)}\frac{1}{z^r}+O\left[\frac{1}{z^{N^\ast+1}}\right],
	\end{eqnarray}
	as $|z|\to\infty,$ $|\text{arg} z|\leq\mu.$ For two nonnegative quantities $A$ and $B,$ we write $A \lesssim B$ if there exists a constant $C>0$ independent of the relevant variable, such that $A \leq CB,$ we write $A \asymp B$ if both $A \lesssim B$ and $B \lesssim A$ hold.
	From the expansion \eqref{EX1}, it is easy to see that for real $x>0$
	\begin{eqnarray}\label{neweq2}
		E_{\alpha,\beta}(x)\asymp \frac{1}{\alpha}x^{\frac{(1-\beta)}{\alpha}}e^{x^{\frac{1}{\alpha}}},~~~~\,\,\, x \to \infty.
	\end{eqnarray}
	Consequently, for any $\epsilon \in (0,1),$ there exists $x_0>0$ such that
	\[
	E_{\alpha,\beta}(x)\ge e^{(1-\epsilon)x^{1/\alpha}},
	\qquad x\ge x_0.
	\]
	Since $E_{\alpha,\beta}$ is continuous and positive on the compact interval
	$[0,x_0]$, the function
	\[
	x\mapsto E_{\alpha,\beta}(x)e^{-(1-\epsilon)x^{1/\alpha}}
	\]
	attains its minimum value there. Thus, defining
	\[
	C_{\epsilon}=\min\left\{1,\min_{ x\in [0,x_0]}
	E_{\alpha,\beta}(x)e^{-(1-\epsilon)x^{1/\alpha}}\right\}>0,
	\]
	we obtain the global lower bound
	\begin{eqnarray}\label{neweq1}
		E_{\alpha,\beta}(x)\ge C_{\epsilon} e^{(1-\epsilon)x^{1/\alpha}},
		\qquad x\ge0.
	\end{eqnarray}

	We now present the principal result of this paper. We begin by establishing a characterization of bounded weighted composition operators on the weighted Fock spaces $F^2_{\alpha, m}(\mathbb C).$
	
	\begin{theorem}\label{Th_1}
		Let $\psi\in F^2_{\alpha, m}(\mathbb C)$ and $\phi$ be entire function on $\mathbb C$ such that $\psi$ is not identically zero. Then the following are equivalent:
		
		\begin{itemize}
			\item [(i)] $C_{\psi,\phi}$ is bounded on $F^2_{\alpha, m}(\mathbb C).$
			\item[(ii)] $\phi(z)=\phi(0)+\lambda z$ with $|\lambda|\leq 1$ and $\,\sup \{|\psi(z)|^2 e^{\alpha(|\phi(z)|^m-|z|^m)}: z\in\mathbb C\}<\infty.$
		\end{itemize}
	\end{theorem}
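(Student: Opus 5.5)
Both directions will use the kernel estimate coming from the Mittag-Leffler asymptotics. Observe that $\|k^{\alpha,m}_z\|^2_{\alpha,m}=\Gamma(\frac2m)E_{2/m,2/m}(\alpha^{2/m}|z|^2)$. By \eqref{neweq2} with index $2/m$, this gives
\[
\|k_z\|^2\asymp (\alpha^{2/m}|z|^2)^{\frac{m}{2}-1}e^{\alpha|z|^m},\qquad |z|\to\infty,
\]
provided $2/m<2$, i.e. $m>1$. The case $m\le 1$ would need the general Mittag-Leffler asymptotics; I expect the same form to hold, and I would first check that the paper's cited expansion covers it. Taking square roots,
\[
\|k_z\|\asymp |z|^{\frac m2-1}e^{\frac\alpha2|z|^m}.
\]
The lower bound \eqref{neweq1} gives the cruder estimate
\[
\|k_z\|^2\ge C_\epsilon e^{(1-\epsilon)\alpha|z|^m}
\]
for all $z$.

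\emph{(i)$\Rightarrow$(ii).} Use the adjoint identity $C_{\psi,\phi}^*k_z=\overline{\psi(z)}k_{\phi(z)}$. Boundedness then gives
\[
|\psi(z)|\,\|k_{\phi(z)}\|\le \|C_{\psi,\phi}\|\,\|k_z\|.
\]
Plugging in the two-sided asymptotics yields
\[
|\psi(z)|^2e^{\alpha(|\phi(z)|^m-|z|^m)}\lesssim \Big(\frac{1+|z|}{1+|\phi(z)|}\Big)^{m-2}
\]
for large $|z|$ and $|\phi(z)|$; when $|\phi(z)|$ is bounded the same argument with a constant works.

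The main obstacle is removing the polynomial factor and showing $\phi$ is affine. The plan is as follows.
\begin{itemize}
\item First use the crude bound. With $\epsilon$ small, and since $\psi\not\equiv 0$, it gives $|\psi(z)|^2e^{(1-\epsilon)\alpha|\phi(z)|^m}\lesssim e^{\alpha|z|^m}$.
\item Choose circles $|z|=r$ avoiding zeros of $\psi$. There $\log|\psi|$ is controlled from below on average by Jensen's formula, and $\psi\in F^2_{\alpha,m}$ has order at most $m$.
\item Deduce that $\max_{|z|=r}|\phi|^m\le(1+\epsilon')r^m$ outside a small set. A Borel--Carathéodory / Hadamard argument applied to $\phi$ then shows $\phi$ is a polynomial of degree $\le 1$, say $\phi(z)=\phi(0)+\lambda z$, with $|\lambda|^m\le 1+\epsilon'$ for every $\epsilon'$, hence $|\lambda|\le1$.
\item Once $\phi$ is affine with $|\lambda|\le 1$, the ratio $(1+|z|)/(1+|\phi(z)|)$ is comparable to a constant whenever $\lambda\ne0$. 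This gives the supremum condition directly.
\item If $\lambda=0$, the condition reduces to $|\psi(z)|^2e^{-\alpha|z|^m}$ being bounded, which follows from the pointwise estimate $|f(z)|\le\|f\|\,\|k_z\|$ only up to a polynomial factor. For $m\ge 2$ I would instead use $\psi=C_{\psi,\phi}1$ and the decay \eqref{eq_9898}, which does give boundedness.
\end{itemize}

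\emph{(ii)$\Rightarrow$(i).} For $f\in F^2_{\alpha,m}$, write
\[
\|C_{\psi,\phi}f\|^2=\int|f(\phi(z))|^2|\psi(z)|^2e^{-\alpha|z|^m}\,dA
\le M\int|f(\phi(z))|^2e^{-\alpha|\phi(z)|^m}\,dA(z).
\]
If $\lambda\ne0$, the change of variables $w=\phi(z)$ has Jacobian $|\lambda|^{-2}$, so the right side is at most $M|\lambda|^{-2}\|f\|^2$ (up to the normalizing constant). If $\lambda=0$, $\phi$ is a constant $c$, and $C_{\psi,\phi}f=f(c)\psi$ is bounded since $\psi\in F^2_{\alpha,m}$ and point evaluation is bounded. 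This direction is routine; the affine-rigidity step above is where the real work lies.
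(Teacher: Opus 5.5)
\textbf{Verdict.} Except for one case, your plan follows the paper's proof, and in that case ($\lambda=0$, $m>2$) the step fails and cannot be repaired.

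\textbf{Where you match the paper.} You use the adjoint identity on kernels together with the Mittag-Leffler asymptotics, and the crude bound \eqref{neweq1} for $\|k_{\phi(z)}\|$. You then average logarithms over circles with Jensen to force $\phi$ to be affine with $|\lambda|\le 1$. The asymptotic ratio finishes the case $\lambda\neq0$, and a change of variables gives (ii)$\Rightarrow$(i).

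\textbf{Two smaller remarks.} First, averaging does not give $\max_{|z|=r}|\phi|^m\le(1+\epsilon')r^m$. It gives only the mean bound $\frac{1}{2\pi}\int_{-\pi}^{\pi}|\phi(re^{i\theta})|^m\,d\theta\le(1-\epsilon)^{-1}r^m+O(\log r)$. That is enough. The Poisson estimate for the subharmonic function $|\phi|^m$ gives $\max_{|z|=r/2}|\phi|^m\le 3\cdot\frac{1}{2\pi}\int_{-\pi}^{\pi}|\phi(re^{i\theta})|^m\,d\theta$, so $|\phi(z)|=O(|z|)$. Cauchy's estimates then make $\phi$ affine, and $|\lambda|\le1$ follows because the mean is asymptotic to $|\lambda|^m r^m$. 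This route even avoids the paper's separate exclusion of transcendental $\phi$. Second, your worry about $m\le 1$ is harmless: the positive-axis asymptotic \eqref{neweq2} holds for every index $2/m>0$.

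\textbf{The gap: $\lambda=0$ with $m>2$.} Here you, like the paper, appeal to the decay property \eqref{eq_9898}. Your suspicion of the polynomial factor was justified, and the appeal fails.
For $m>2$, the functionals $f\mapsto f(z)e^{-\alpha|z|^m/2}$ have norms $\|k_z\|e^{-\alpha|z|^m/2}\asymp|z|^{(m-2)/2}\to\infty$. By the uniform boundedness principle, some $f\in F^2_{\alpha,m}(\mathbb C)$ has $f(z)e^{-\alpha|z|^m/2}$ unbounded. Explicitly, take $f=\sum_j j^{-2}\hat k_{x_j}$ with $x_j>0$ and $x_j^{(m-2)/2}\ge j^3$. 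Because the kernels are positive on the positive axis, $f(x_i)e^{-\alpha x_i^m/2}\ge i^{-2}\|k_{x_i}\|e^{-\alpha x_i^m/2}\gtrsim i$.
Now take $\psi=f$ and $\phi\equiv 0$. Then $C_{\psi,\phi}g=g(0)\psi$ is rank one and bounded, yet $\sup_z|\psi(z)|^2e^{\alpha(|\phi(z)|^m-|z|^m)}=\infty$. So (i)$\Rightarrow$(ii) is false for constant $\phi$ when $m>2$, and the paper's proof breaks at the same line.

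\textbf{What your argument does prove.} It proves the theorem for every nonconstant $\phi$. It also covers constant $\phi$ when $m\le 2$, since then $|\psi(z)|^2e^{-\alpha|z|^m}\le\|\psi\|^2\|k_z\|^2e^{-\alpha|z|^m}\lesssim|z|^{m-2}$ is bounded. For $m>2$ the statement has to exclude constant $\phi$; for such $\phi$, boundedness holds for every $\psi\in F^2_{\alpha,m}(\mathbb C)$.
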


	Our next result provides a characterization of compact weighted composition operators on $F^2_{\alpha, m}(\mathbb C).$
	\begin{theorem}\label{Th_2}
		Let $\psi\in F^2_{\alpha, m}(\mathbb C)$ and $\phi$ be entire function on $\mathbb C$ such that $\psi$ is not identically zero. Then the following are equivalent:
		\begin{itemize}
			\item [(i)]  $C_{\psi,\phi}$ is compact on $F^2_{\alpha, m}(\mathbb C).$
			\item [(ii)] $\phi(z)=\phi(0)+\lambda z$ for some $|\lambda|< 1$ and $ \lim\limits_{|z|\to\infty}|\psi(z)|^2 e^{\alpha(|\phi(z)|^m-|z|^m)}= 0.$
		\end{itemize}
	\end{theorem}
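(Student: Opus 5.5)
The statement to prove is Theorem \ref{Th_2}. The plan is to use normalized reproducing kernels together with the Mittag-Leffler asymptotics \eqref{neweq2}, exactly as for Theorem \ref{Th_1}.

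First observe that the kernel can be written via the Mittag-Leffler function:
$$\|k^{\alpha,m}_z\|^2_{\alpha,m}=\Gamma(\tfrac2m)E_{\frac2m,\frac2m}(\alpha^{\frac2m}|z|^2).$$
For $m>1$ the parameter $2/m$ lies in $(0,2)$, so \eqref{neweq2} applies and gives
$$\|k_z\|^2\asymp |z|^{m-2}e^{\alpha|z|^m},\qquad |z|\to\infty.$$
For $m\le 1$ one can instead use the series directly, or a generalized expansion. Either way the key two-sided estimate is
$$\|k_z\|^2 = e^{\alpha|z|^m+O(\log|z|)}.$$

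\textbf{(i)$\Rightarrow$(ii).} Compactness implies boundedness, so by Theorem \ref{Th_1} we have $\phi(z)=\phi(0)+\lambda z$ with $|\lambda|\le1$. The normalized kernels $\kappa_z=k_z/\|k_z\|$ tend to $0$ weakly as $|z|\to\infty$: the map $z\mapsto\langle f,\kappa_z\rangle=f(z)/\|k_z\|$ tends to $0$, by \eqref{eq_9898} and the lower bound \eqref{neweq1}. Hence $\|C^*_{\psi,\phi}\kappa_z\|\to0$. Since
$$C_{\psi,\phi}^*k_z=\overline{\psi(z)}\,k_{\phi(z)},$$
this gives
$$|\psi(z)|^2\frac{\|k_{\phi(z)}\|^2}{\|k_z\|^2}\to0.$$
Combined with the kernel asymptotics this yields $|\psi(z)|^2e^{\alpha(|\phi(z)|^m-|z|^m)}\,\rho(z)\to0$, where $\rho$ is a polynomial-type factor $(|\phi(z)|/|z|)^{m-2}$ (with care near $\phi(z)=0$). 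This factor is an obstacle, and I would remove it as follows.

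Suppose $|\lambda|=1$. Then $|\phi(z)|\sim|z|$, so $\rho\asymp1$ and $|\psi(z)|^2e^{\alpha(|\phi(z)|^m-|z|^m)}\to0$. Writing $\phi(z)=b+\lambda z$, the exponent $|b+\lambda z|^m-|z|^m$ behaves like $m|z|^{m-1}\mathrm{Re}(\bar\lambda b\,\bar z/|z|)$ plus lower-order terms. In a half-plane direction where this is $\ge0$ we get $\psi\to0$ along a sector. A Phragmén–Lindelöf or Liouville-type argument should then force $\psi\equiv0$. More cleanly, I would mimic Le's argument: $|\psi(z)|^2e^{\alpha(|\phi(z)|^m-|z|^m)}$ tends to $0$ both at $z$ and at the reflected point $-z-2\bar\lambda b$. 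The product of the two exponential factors is $\ge1$ up to lower-order terms, so the entire function $\psi(z)\psi(-z-2\bar\lambda b)$ has growth $e^{o(|z|^{m-1})}$ along rays and tends to $0$. This contradicts $\psi\not\equiv0$ via Liouville/Phragmén–Lindelöf. Hence $|\lambda|<1$.

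With $|\lambda|<1$, the factor $\rho$ is at most a power of $|z|$. The exponential gap $e^{-\alpha(1-|\lambda|^m)|z|^m/2}$ absorbs it once the boundedness bound from Theorem \ref{Th_1} is used. So splitting $e^{\alpha(\cdot)}$ into two halves gives the limit $0$.

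\textbf{(ii)$\Rightarrow$(i).} I would show that $C_{\psi,\phi}f_n\to0$ for every bounded sequence $f_n\to0$ uniformly on compacta. Write
$$\|C_{\psi,\phi}f_n\|^2=\int_{|z|\le R}+\int_{|z|>R}.$$
The first integral tends to $0$ by uniform convergence, since $\phi$ maps compacts to compacts. For the second, bound it by
$$\sup_{|z|>R}|\psi|^2e^{\alpha(|\phi|^m-|z|^m)}\int|f_n(\phi(z))|^2e^{-\alpha|\phi(z)|^m}\,dA(z).$$
Changing variables $w=\phi(z)$ with Jacobian $|\lambda|^{-2}$ (for $\lambda\ne0$) bounds the integral by $C\|f_n\|^2$, and the supremum is small for large $R$. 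If $\lambda=0$, $\phi$ is constant; then $C_{\psi,\phi}$ has rank one and is compact since $\psi\in F^2_{\alpha,m}$.

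The main obstacle is excluding $|\lambda|=1$ when $\phi(0)\neq0$, for general $m$. I expect the reflection/product trick combined with Phragmén–Lindelöf to handle it.
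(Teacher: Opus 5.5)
The step you flag as the main obstacle, ruling out $|\lambda|=1$, fails as proposed when $m>2$. Your (ii)$\Rightarrow$(i) argument and the kernel step of (i)$\Rightarrow$(ii) are essentially the paper's.

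Write $c=\bar\lambda\phi(0)$, so $|\phi(z)|=|z+c|$, and set $E(z)=|z+c|^m-|z|^m$. Your reflected point is $z'=-z-2c$. With $w=z+c$,
\[
E(z)+E(z')=2|w|^m-|w-c|^m-|w+c|^m=-m|c|^2|w|^{m-2}\bigl(1+(m-2)\cos^2\theta\bigr)+O(|w|^{m-3}),\qquad \theta=\arg(c/w).
\]
For $m\le 2$ this quantity is bounded. Then $\psi(z)\psi(z')\to0$, and Liouville finishes the argument; this is Le's situation.

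For $m>2$ it tends to $-\infty$ like $-|z|^{m-2}$ in every direction. For $m=4$ it equals exactly $-4|c|^2|w|^2-8(\Re(w\bar c))^2-2|c|^4$. So the product of the two exponential factors is not bounded below. All you get is $|\psi(z)\psi(z')|^2\le o(1)\,e^{C|z|^{m-2}}$, a bound that nonzero constants satisfy. Neither Liouville nor Phragm\'en--Lindel\"of applied to $\psi(z)\psi(z')$ can then give a contradiction. The reflection through $-c$ respects $|\phi|$ but not the radial weight $|z|^m$, and for $m>2$ the convexity of $|\cdot|^m$ works against you.

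The repair is to symmetrize about the origin. There are two ways to do this.

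\textbf{Pair $z$ with $-z$.} Then $E(z)+E(-z)=|z+c|^m+|z-c|^m-2|z|^m$. This is $\ge0$ for $m\ge1$ by convexity and is $O(|z|^{m-2})$ for $m<1$. Hence $\psi(z)\psi(-z)\to0$, and Liouville gives $\psi\equiv0$ for every $m$.

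\textbf{Follow the paper.} Take logarithms in $|\psi|^2e^{\alpha E}<\epsilon$ on circles $|z|=R$ and integrate in $\theta$. Writing $\psi=z^kh$, use Jensen's formula $\frac1{2\pi}\int\log|h(Re^{i\theta})|\,d\theta\ge\log|h(0)|$. Use also that the circle mean of $E$ is bounded below: for $m>2$ it is $\ge0$ by H\"older and the $L^2$ mean $|\phi(0)|^2+R^2$, and for $m\le2$ it follows from the expansion. This yields $2k\log R+2\log|h(0)|-C<\log\epsilon$ for arbitrarily small $\epsilon$, a contradiction.

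Two smaller points. First, the factor $\rho=|\phi(z)/z|^{m-2}$ needs no absorption argument. When $\lambda\neq0$ it tends to $|\lambda|^{m-2}>0$, and the case $\lambda=0$ is just \eqref{eq_9898}. Boundedness could not close this step anyway: on $F^2(\mathbb C)$, take $\phi(z)=\lambda z$ with $0<|\lambda|<1$ and $\psi(z)=e^{az^2}$ with $2|a|=1-|\lambda|^2$; this gives a bounded, non-compact operator. Second, check $\langle f,\kappa_z\rangle\to0$ on polynomials and extend by density, since combining \eqref{eq_9898} with \eqref{neweq1} loses a factor $e^{\epsilon\alpha|z|^m/2}$.
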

	
	We then turn our attention to the essential norm of bounded weighted composition operators. To this end, we first recall the definition of the essential norm.
	If $T$ is a bounded linear operator on 
	$F^2_{\alpha,m}(\mathbb C)$ then the essential norm of $T$ is defined by
	\[
	\|T\|_{e}
	=
	\inf\bigl\{\|T-K\|: K \text{ is a compact operator on } F^2_{\alpha,m}\bigr\}.
	\]
	The following theorem provides an asymptotic characterization of the essential norm.

	\begin{theorem}\label{Th_3}
		Let $C_{\psi,\phi}$ be bounded on $F^2_{\alpha, m}(\mathbb C)$ and $\phi$ be non-constant. Then 
		$$ \|C_{\psi,\phi}\|_{e}^2\asymp  \lim_{|z|\to\infty}\sup |\psi(z)|^2e^{\alpha(|\phi(z)|^m-|z|^m)}.$$
	\end{theorem}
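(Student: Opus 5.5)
Writing $\Lambda:=\limsup_{|z|\to\infty}|\psi(z)|^2e^{\alpha(|\phi(z)|^m-|z|^m)}$, I would prove the two inequalities $\|C_{\psi,\phi}\|_e^2\gtrsim \Lambda$ and $\|C_{\psi,\phi}\|_e^2\lesssim \Lambda$ separately. Throughout I take from Theorem \ref{Th_1} that $\phi(z)=\phi(0)+\lambda z$ with $0<|\lambda|\le 1$; here $\lambda\neq 0$ because $\phi$ is non-constant. The basic tool is the normalized kernel $K_z:=k^{\alpha,m}_z/\|k^{\alpha,m}_z\|_{\alpha,m}$. Since $\|k^{\alpha,m}_z\|^2=\Gamma(\tfrac 2m)E_{2/m,2/m}(\alpha^{2/m}|z|^2)$, the asymptotic \eqref{neweq2} gives
\[
\|k^{\alpha,m}_z\|^2\asymp |z|^{m-2}e^{\alpha|z|^m}\quad (|z|\to\infty).
\]
This needs $2/m<2$, i.e. $m>1$. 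For $m\le1$ I would instead use the Mittag-Leffler asymptotics valid for larger first parameter, or a Laplace-method estimate of the series.

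For the lower bound, I first check that $K_z\to 0$ weakly as $|z|\to\infty$. This holds because $\langle f,K_z\rangle=f(z)/\|k_z\|$, which tends to $0$ for polynomials, and polynomials are dense. Hence $\|KK_z\|\to0$ for every compact $K$, and so
\[
\|C_{\psi,\phi}-K\|\ge\limsup_{|z|\to\infty}\|C_{\psi,\phi}^*K_z\|.
\]
Next, $C_{\psi,\phi}^*k_z=\overline{\psi(z)}k_{\phi(z)}$, so
\[
\|C_{\psi,\phi}^*K_z\|^2=|\psi(z)|^2\frac{\|k_{\phi(z)}\|^2}{\|k_z\|^2}\asymp |\psi(z)|^2\Big(\tfrac{|\phi(z)|}{|z|}\Big)^{m-2}e^{\alpha(|\phi(z)|^m-|z|^m)}.
\]
Because $\lambda\ne0$, the ratio $|\phi(z)|/|z|\to|\lambda|\in(0,1]$, so the polynomial factor is bounded above and below. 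Taking the $\limsup$ gives $\|C_{\psi,\phi}\|_e^2\gtrsim\Lambda$. Here I need the two-sided asymptotic for $\|k_w\|$ only for large $|w|$, and $|\phi(z)|\to\infty$ supplies this.

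For the upper bound, let $P_n$ be the orthogonal projection onto $\mathrm{span}\{e_0,\dots,e_n\}$ and $R_n=I-P_n$. Since $C_{\psi,\phi}P_n$ is finite rank,
\[
\|C_{\psi,\phi}\|_e\le\liminf_n\|C_{\psi,\phi}R_n\|.
\]
Fix $r>0$ and take $f$ in the unit ball. I split
\[
\|C_{\psi,\phi}R_nf\|^2=\int_{|z|\le r}+\int_{|z|>r} |\psi(z)|^2|R_nf(\phi(z))|^2\,d\mu_{\alpha,m}(z).
\]
On the disc $|z|\le r$, the function $R_nf$ tends to $0$ uniformly on the compact set $\phi(\overline{D(0,r)})$, uniformly in $\|f\|\le1$, because
\[
|R_nf(w)|^2\le\sum_{k>n}|e_k(w)|^2\to0
\]
uniformly on compacts. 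Also $\psi\in F^2_{\alpha,m}$, so this first piece vanishes as $n\to\infty$. For the outer piece, write $\sup_{|z|>r}|\psi(z)|^2e^{\alpha(|\phi(z)|^m-|z|^m)}=:\Lambda_r$. Then
\[
\int_{|z|>r}\le\Lambda_r\int_{\mathbb C}|g(\phi(z))|^2e^{-\alpha|\phi(z)|^m}\,dA(z),\qquad g=R_nf.
\]
Changing variables $w=\phi(z)$, with Jacobian $|\lambda|^2$, this is at most $\Lambda_r|\lambda|^{-2}\|g\|^2\le\Lambda_r|\lambda|^{-2}$. Letting $n\to\infty$ and then $r\to\infty$ gives $\|C_{\psi,\phi}\|_e^2\le|\lambda|^{-2}\Lambda$, with the constant $|\lambda|^{-2}$ allowed since $\lambda$ is fixed.

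I expect the main obstacle to be the kernel norm asymptotic uniformly across all $m>0$, because \eqref{EX1} is stated only for first parameter in $(0,2)$. I would handle the case $m\le1$ by a direct saddle-point estimate of $\sum x^n/\Gamma(\tfrac{2n+2}{m})$, or equivalently by writing the norm as an integral of $e^{-\alpha|z|^m}$ against $|k_z|^2$. The remaining steps are routine.
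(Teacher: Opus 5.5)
Your argument is correct and is essentially the paper's: the lower bound uses the normalized kernels and $C_{\psi,\phi}^*k^{\alpha,m}_z=\overline{\psi(z)}k^{\alpha,m}_{\phi(z)}$, and the upper bound uses the tail operators $R_n$, a split into a bounded region and its complement, and the change of variables $w=\phi(z)$. The only differences are that you need just the trivial inequality $\|C_{\psi,\phi}\|_e\le\|C_{\psi,\phi}R_n\|$ and a Cauchy--Schwarz bound by $\sum_{k>n}|e_k(w)|^2$ in place of Lemma~\ref{L1} and Lemma~\ref{Lmmess1}; your caveat about $m\le 1$ is also well taken (the paper uses \eqref{neweq2} for every $m>0$ although \eqref{EX1} is quoted only for first parameter in $(0,2)$), and the standard Mittag-Leffler expansion for first parameter $\ge 2$, whose principal exponential term still dominates on the positive real axis, closes it.
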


	Finally, we characterize the Hilbert--Schmidt class of weighted composition operators on $F^2_{\alpha, m}(\mathbb C).$ The next theorem gives a necessary and sufficient condition in terms of an explicit integral involving the symbols $\psi$ and $\phi.$
	\begin{theorem}\label{Th_4}
		Let $\psi\in F^2_{\alpha, m}(\mathbb C)$ and $\phi$ be entire function on $\mathbb C.$ Then $C_{\psi,\phi}$ is Hilbert-Schmidt on $F^2_{\alpha, m}(\mathbb C)$ if and only if $$\int_{\mathbb C} |\psi(z)|^2 |\phi(z)|^{m-2} e^{\alpha ({|\phi(z)|^m-|z|^m})}dA(z)<\infty.$$
	\end{theorem}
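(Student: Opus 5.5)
The plan is to compute the Hilbert--Schmidt norm in the orthonormal basis $\{e_n\}$ and then identify the resulting series with a kernel norm. We have
$$\sum_{n=0}^\infty \|C_{\psi,\phi}e_n\|_{\alpha,m}^2=\sum_{n=0}^\infty\int_{\mathbb C}|\psi(z)|^2|e_n(\phi(z))|^2\,d\mu_{\alpha,m}(z).$$
By Tonelli's theorem (all terms are nonnegative) we may interchange sum and integral. Since $\sum_n |e_n(w)|^2=\|k^{\alpha,m}_w\|_{\alpha,m}^2$, this gives
$$\|C_{\psi,\phi}\|_{HS}^2=\frac{m\alpha^{2/m}}{2\pi\Gamma(2/m)}\int_{\mathbb C}|\psi(z)|^2\,\|k^{\alpha,m}_{\phi(z)}\|^2_{\alpha,m}\,e^{-\alpha|z|^m}\,dA(z).$$
This identity holds in $[0,\infty]$. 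So $C_{\psi,\phi}$ is Hilbert--Schmidt exactly when this integral is finite. (If the integral is finite, the operator is automatically bounded, since the defining sum is finite.)

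The second step is to find two-sided asymptotics of the kernel norm. We write
$$\|k^{\alpha,m}_w\|^2=\Gamma\left(\tfrac2m\right)E_{2/m,\,2/m}\left(\alpha^{2/m}|w|^2\right).$$
When $2/m<2$, i.e.\ $m>1$, the asymptotic \eqref{neweq2} with parameters $(2/m,2/m)$ and $x=\alpha^{2/m}|w|^2$ gives
$$E(x)\asymp x^{(1-2/m)m/2}e^{x^{m/2}}=\alpha^{1-2/m}|w|^{m-2}e^{\alpha|w|^m}.$$
Hence $\|k_w\|^2\asymp |w|^{m-2}e^{\alpha|w|^m}$ for $|w|\ge1$. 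On $|w|\le1$ the kernel norm is bounded above and below by positive constants.

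The case $m\le1$ is the main obstacle, because \eqref{EX1} is only quoted for first parameter in $(0,2)$. There I would either invoke the general Mittag-Leffler asymptotic, which still yields $E_{a,b}(x)\sim a^{-1}x^{(1-b)/a}e^{x^{1/a}}$ for real $x\to\infty$ and any $a>0$, or estimate the series directly. For the direct route, Stirling's formula together with a Laplace-method comparison of $\sum_n x^n/\Gamma(2(n+1)/m)$ with $\int_0^\infty x^t/\Gamma(2(t+1)/m)\,dt$ gives the same asymptotic.

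Finally we split the integral according to $|\phi(z)|\ge1$ and $|\phi(z)|<1$. On the first set the integrand is comparable to $|\psi|^2|\phi|^{m-2}e^{\alpha(|\phi|^m-|z|^m)}$. On the second set both integrands are controlled by $|\psi(z)|^2e^{-\alpha|z|^m}$ times a weight that is bounded above and below on $\{|\phi|<1\}$. This is because there $|\phi|^{m-2}e^{\alpha|\phi|^m}$ is bounded below for $m\le 2$ and bounded above for $m\ge2$.

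One delicate point remains for $m\neq2$, where the factor $|\phi|^{m-2}$ may blow up (if $m<2$) or vanish (if $m>2$) near zeros of $\phi$. Take $m<2$: on $\{|\phi|<1\}$ the stated integrand may be larger than the HS integrand. In one direction, finiteness of the stated integral dominates $\int_{|\phi|<1}|\psi|^2e^{-\alpha|z|^m}\,dA$, which is all we need. In the other direction, we need $\int_{|\phi|<1}|\psi|^2|\phi|^{m-2}e^{-\alpha|z|^m}\,dA<\infty$. I would obtain this from local integrability: $|\phi|^{m-2}$ is locally integrable because $m-2>-2$ and $\phi$ has isolated zeros of finite order. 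Outside a large disc, one also needs $\{|\phi|<1\}$ to contribute finitely. This uses the observation that an HS operator is bounded, so Theorem~\ref{Th_1} forces $\phi$ to be affine with $\lambda\ne0$ (or constant). Then $\{|\phi|<1\}$ is a bounded disc, and the integral of $|\phi|^{m-2}$ over it is finite. The case $m>2$ is symmetric, and $\psi\in F^2_{\alpha,m}$ handles the remaining terms.
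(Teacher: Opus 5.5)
Your route is the paper's: Tonelli in the basis $\{e_n\}$, the identity $\sum_n|e_n(w)|^2=\|k^{\alpha,m}_w\|^2_{\alpha,m}$, and then the Mittag-Leffler asymptotic. You are right to be more careful than the paper at two points. First, the paper applies $\|k^{\alpha,m}_w\|^2\asymp|w|^{m-2}e^{\alpha|w|^m}$ on all of $\mathbb C$, which is false near zeros of $\phi$ when $m\neq 2$. Second, it also applies it for $m\le 1$, outside the parameter range in which \eqref{EX1} was quoted. Your splitting along $\{|\phi|\ge 1\}$ and $\{|\phi|<1\}$ works for $m\ge 2$: on $\{|\phi|<1\}$ both integrands are $\lesssim|\psi|^2e^{-\alpha|z|^m}$, and $\psi\in F^2_{\alpha,m}$ settles that set. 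It also works for every $m$ in the direction ``integral finite $\Rightarrow$ Hilbert--Schmidt''.

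The gap is in the direction ``Hilbert--Schmidt $\Rightarrow$ integral finite'' for $m<2$ on $\{|\phi|<1\}$.

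First, local integrability of $|\phi|^{m-2}$ does not follow from the zeros being isolated and of finite order. Near a zero of order $k$ one has $|\phi|^{m-2}\asymp|z-z_0|^{-k(2-m)}$, which is integrable in the plane only when $k(2-m)<2$; for $m=1$, $\phi(z)=z^2$ gives $|z|^{-2}$. What you actually need is that the zero is simple. That comes from the affine form given by Theorem~\ref{Th_1}, so you must invoke that theorem before the local step, not only for the part outside a large disc.

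Second, and more seriously, the parenthetical ``(or constant)'' is not covered by ``$\{|\phi|<1\}$ is a bounded disc''. For $\phi\equiv c$ with $0<|c|<1$ you can just compute: the integral is $|c|^{m-2}e^{\alpha|c|^m}$ times a multiple of $\|\psi\|^2_{\alpha,m}$. For $\phi\equiv 0$ and $m<2$, however, the integrand is $+\infty$ wherever $\psi\neq 0$. Yet $C_{\psi,\phi}f=f(0)\psi=\langle f,k^{\alpha,m}_0\rangle_{\alpha,m}\psi$ is rank one, hence Hilbert--Schmidt; your own identity gives $\|C_{\psi,\phi}\|^2_{HS}=\|\psi\|^2_{\alpha,m}$, since $\|k^{\alpha,m}_0\|_{\alpha,m}=1$. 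So the statement itself fails for $\phi\equiv 0$, $m<2$, $\psi\not\equiv 0$, and no argument can close this case; it has to be excluded from the theorem. The paper's proof overlooks this as well. With that exclusion and the simple-zero remark, your argument is complete.
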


	This article is organized as follows. Section~\ref{S1} is devoted to the characterization of bounded weighted composition operators on $F^2_{\alpha,m}(\mathbb C)$. In Section~ \ref{S2}, we characterize compact weighted composition operators. In Section~\ref{S3}, we obtain an asymptotic characterization of the essential norm of bounded weighted composition operators and establish a necessary and sufficient condition for such operators to belong to the Hilbert-Schmidt class. Section~\ref{S4} applies the main results, presenting illustrative examples, explicit boundedness and compactness criteria for weighted composition operators with exponential weights, and corresponding characterizations for composition operators. Finally, Section~\ref{S5} provides examples distinguishing bounded, compact, and Hilbert–Schmidt weighted composition operators, and Section~\ref{S6} contains final remarks.
	\section {Boundedness}\label{S1}
	We begin by establishing a complete characterization of bounded weighted composition operators $C_{\psi,\phi}$ on $F^2_{\alpha, m}(\mathbb C).$
	\begin{proof}[Proof of Theorem \ref{Th_1}]
		First we prove (ii) implies (i). We consider the following two cases separately.\\
		Case 1: Suppose $\lambda=0.$ Then $\phi(z)=\phi(0)$ for all $z$.  Thus, $C_{\psi,\phi}$ is a rank one operator and hence bounded. \\
		Case 2: Let $\lambda\neq 0.$ For $f\in F^2_{\alpha, m}(\mathbb C),$ we get
		\begin{eqnarray*}
			\|C_{\psi,\phi}f\|_{\alpha,m}^2&=&\int_{\mathbb C}|\psi(z)|^2|f(\phi(z))|^2d\mu_{\alpha, m}(z)\\
			&=&\int_{\mathbb C}|\psi(z)|^2|f(\phi(z))|^2\frac{m\alpha^{\frac 2m}}{2\pi \Gamma(\frac 2m)}e^{-\alpha |z|^m} dA(z)\\
			&=& \int_{\mathbb C}|\psi(z)|^2|f(\phi(z))|^2\frac{m\alpha^{\frac 2m}}{2\pi \Gamma(\frac 2m)}e^{-\alpha |\phi(z)|^m}e^{\alpha (|\phi(z)|^m-|z|^m)} dA(z)\\
			&\leq&M_{\alpha, m}(\psi,\phi) \int_{\mathbb C}|f(\phi(z))|^2\frac{m\alpha^{\frac 2m}}{2\pi \Gamma(\frac 2m)}e^{-\alpha |\phi(z)|^m}dA(z),
		\end{eqnarray*}
		where $M_{\alpha, m}(\psi,\phi)= \sup \{|\psi(z)|^2 e^{\alpha(|\phi(z)|^m-|z|^m)}: z\in\mathbb C\}.$
		By the change of variable $w=\phi(z)=\phi(0)+\lambda z,$ we get 
		\begin{eqnarray*}
			\|C_{\psi,\phi}f\|_{\alpha,m}^2&\leq& |\lambda|^{-2}M_{\alpha, m}(\psi,\phi) \int_{\mathbb C}|f(w)|^2e^{-\alpha |w|^m}\frac{m\alpha^{\frac 2m}}{2\pi \Gamma(\frac 2m)}dA(w)\\
			&=& |\lambda|^{-2}M_{\alpha, m}(\psi,\phi) \int_{\mathbb C}|f(w)|^2d\mu_{\alpha, m}(w)\\
			&=&|\lambda|^{-2}M_{\alpha, m}(\psi,\phi)\|f\|^2_{\alpha,m}.
		\end{eqnarray*}
		It follows that $C_{\psi,\phi}$ is bounded on $F^2_{\alpha, m}(\mathbb C).$\\

		Next, we prove (i) implies (ii).  For every $z\in\mathbb C,$ a direct application of the reproducing property yields $C_{\psi,\phi}^\ast k_z^{\alpha,m}=\overline{\psi(z)}k_{\phi(z)}^{\alpha,m}.$ Now, 
		\begin{eqnarray*}
			\|C_{\psi,\phi}^\ast k_z^{\alpha,m}\|_{\alpha,m}^2
			%&=&\|\overline{\psi(z)}k_{\phi(z)}^{\alpha,m}\|_{\alpha,m}^2\\
			&=&|\psi(z)|^2\|k_{\phi(z)}^{\alpha,m}\|_{\alpha,m}^2\\
			&=&|\psi(z)|^2\sum\limits_{n=0}^\infty \frac{\Gamma(\frac 2m)}{\Gamma(\frac{2n+2}{m})}(\alpha^{\frac 2m}|\phi(z)|^2)^n.
		\end{eqnarray*}
		Thus, we have
		\begin{eqnarray}\label{eqlb2}
			\frac{\|C_{\psi,\phi}^\ast k_z^{\alpha,m}\|_{\alpha,m}^2}{\|k_z^{\alpha,m}\|_{\alpha,m}^2}
			&=&|\psi(z)|^2\frac{\sum\limits_{n=0}^\infty \frac{\Gamma(\frac 2m)}{\Gamma(\frac{2n+2}{m})}(\alpha^{\frac 2m}|\phi(z)|^2)^n}{\sum\limits_{n=0}^\infty \frac{\Gamma(\frac 2m)}{\Gamma(\frac{2n+2}{m})}(\alpha^{\frac 2m}|z|^2)^n}\nonumber\\
			&=&|\psi(z)|^2\frac{E_{\frac {2}{m},\frac {2}{m}}(\alpha^{\frac 2m} |\phi(z)|^2)}{E_{\frac {2}{m},\frac {2}{m}}(\alpha^{\frac 2m} |z|^2)}.
		\end{eqnarray}
		Since $C_{\psi,\phi}$ is bounded on $F^2_{\alpha, m}(\mathbb C)$, there exists $M>0$ such that 
		\begin{eqnarray}\label{Feq_1}
			|\psi(z)|^2\frac{E_{\frac {2}{m},\frac {2}{m}}(\alpha^{\frac 2m} |\phi(z)|^2)}{E_{\frac {2}{m},\frac {2}{m}}(\alpha^{\frac 2m} |z|^2)}<M\quad\quad \forall~z\in\mathbb C.
		\end{eqnarray}
		Let $0<\epsilon<1.$ It follows from \eqref{neweq1} that there exists $C_{\epsilon}>0$ such that 
		\begin{eqnarray}\label{eqcomp1}
			E_{\frac {2}{m},\frac {2}{m}}(\alpha^{\frac 2m} |\phi(z)|^2)\geq C_{\epsilon}e^{\alpha(1-\epsilon)|\phi(z)|^m}.
		\end{eqnarray}
		Now, from \eqref{neweq2}, as $|z|\to\infty,$ 
		\begin{eqnarray}\label{eqcomp2}
			E_{\frac {2}{m},\frac {2}{m}}(\alpha^{\frac 2m} |z|^2)\asymp |z|^{m-2}e^{\alpha|z|^m}.
		\end{eqnarray}
		Using \eqref{eqcomp1} and \eqref{eqcomp2} in \eqref{Feq_1}, we conclude that there exists $M_{\epsilon}'>0$ such that
		\begin{eqnarray}\label{eqlb}
			|\psi(z)|^2 |z|^{2-m}e^{\alpha((1-\epsilon)|\phi(z)|^m-|z|^m)}<M_{\epsilon}'
		\end{eqnarray} as $|z|\to\infty.$
		Since $\psi$ is not identically $0,$ there exist an integer $k\geq0$ and an entire function $h$ with $h(0)\neq0$ such that $\psi(z)=z^k h(z).$ From \eqref{eqlb}, we get
		\begin{eqnarray*}
			|z^k h(z)|^2 |z|^{2-m}e^{\alpha((1-\epsilon)|\phi(z)|^m-|z|^m)}<M_{\epsilon}'.
		\end{eqnarray*}
		Applying the logarithm to both sides yields
		$$(2k-m+2)\log|z|+2\log|h(z)|+\alpha\left((1-\epsilon)|\phi(z)|^m-|z|^m\right)<\log M_{\epsilon}'.$$ 
		For any $R>0,$ setting $z=R e^{i\theta}$ and integrating with respect to $\theta$ over $[-\pi,\pi],$ we obtain 
		\begin{eqnarray*}
			(2k-m+2)\log R+\frac{1}{\pi}\int_{-\pi}^\pi\log|h(R e^{i\theta})|d\theta
			+\frac{\alpha}{2\pi}\int_{-\pi}^\pi\left((1-\epsilon)|\phi(R e^{i\theta})|^m-R^m\right)d\theta<\log M_{\epsilon}'.
		\end{eqnarray*}
		Jensen's inequality gives
		$$\frac{1}{2\pi}\int_{-\pi}^{\pi}\log|h(R e^{i\theta})| d\theta\geq \log|h(0)|.$$ It follows that 
		\begin{eqnarray}\label{eqf11}
			(2k-m+2)\log R+2\log|h(0)|-\alpha R^m
			+\frac{\alpha (1-\epsilon)}{2\pi}\int_{-\pi}^\pi|\phi(R e^{i\theta})|^m d\theta<\log M_{\epsilon}'.
		\end{eqnarray}
		
		Now, we claim that $\phi$ is polynomial. Suppose, on the contrary, that $\phi$ is a transcendental function. 
		Then it follows from \cite[Th. 1.1]{BFY_14} that $$\lim\limits_{R\to\infty}\frac{\log M(\phi, R)}{\log R}=\infty,$$
		where $M(\phi, R)=\max\limits_{|z|=R}|\phi(z)|.$ Thus, for any $N>0$ there exists $R_0>1$ such that
		\begin{eqnarray}\label{eqFF1}
			&&\frac{\log M(\phi, R)}{\log R}>N
			\implies M(\phi, R)>R^N~~~~ \text{for all}~~ R>R_0.
		\end{eqnarray} Consider that 
		$$M_m(\phi, R)=\left(\frac{1}{2\pi}\int_{-\pi}^\pi|\phi(R e^{i\theta})|^m d\theta\right)^{\frac 1m}.$$ Since $|\phi|^m$ is subharmonic, for $0<r<R,$
		$$|\phi(r e^{it})|^m\leq \frac{R+r}{R-r}\frac{1}{2\pi}\int_{-\pi}^\pi|\phi(R e^{i\theta})|^m d\theta.$$ Taking supremum over all $t\in[-\pi,\pi],$ we get 
		$M(\phi, r)\leq \left(\frac{R+r}{R-r}\right)^{\frac 1m} M_m(\phi, R)$ for $0<r<R.$ In particular, consider $r=\frac R2.$ Then 
		\begin{eqnarray}\label{eqff2}
			M\left(\phi,\frac R2\right)\leq 3^{\frac 1m} M_m(\phi, R). 
		\end{eqnarray}
		From \eqref{eqFF1} and \eqref{eqff2}, it follows that for every $N>0,$ 
		\begin{eqnarray}\label{eq_F22}
			M_m(\phi, R)>\frac{1}{2^N 3^{\frac 1m}}R^N,\quad\quad\text{whenever}~~ R\geq2R_0.
		\end{eqnarray}   
		Using \eqref{eq_F22} in \eqref{eqf11}, we get 
		\begin{align}\label{eqf112}
			&(2k-m+2)\log R+2\log|h(0)|-\alpha R^m+\frac{\alpha (1-\epsilon)}{3\times2^{mN}} R^{mN}<\log M_{\epsilon}'\nonumber\\
			&\implies \frac{\alpha}{3}\left(\frac{1-\epsilon}{2^{mN}}R^{m(N-1)}-3\right)<\frac{1}{R^m}\left((m-2k-2)\log R-2\log|h(0)|\right).
		\end{align}
		If $N>1$ then the left hand side of \eqref{eqf112} becomes unbounded as $R\to\infty$, while the right hand side tends to $0,$ which is impossible. Thus, $\phi$ can not be a transcendental function. Therefore, $\phi$ must be polynomial. Suppose $\phi(z)=\sum\limits_{n=0}^pa_n z^n.$ We next show that $\phi$ is of degree at most one. Then 
		$$|\phi(Re^{i\theta})|=R^p \left|a_pe^{ip\theta}+\sum\limits_{n=0}^{p-1}a_n R^{n-p} e^{in\theta}\right|.$$ Since $\lim\limits_{R\to\infty}\frac{ |\phi(Re^{i\theta})|}{|a_p|R^p}=1,$ for every $0<\delta<1$ there exists $R_1>0$ such that 
		\begin{eqnarray}\label{eqbound998}
			(1-\delta)|a_p|R^p\leq|\phi(Re^{i\theta})|,\quad\quad \text{whenever}~R\geq R_1.
		\end{eqnarray} By using \eqref{eqbound998} in \eqref{eqf11},
		we get
		\begin{align}\label{eqbound22}
			&(2k-m+2)\log R+2\log|h(0)|
			+\alpha R^m((1-\epsilon)(1-\delta)^m|a_p|^mR^{m(p-1)}-1)<\log M_{\epsilon}'\nonumber\\
			&\implies \alpha ((1-\epsilon)(1-\delta)^m|a_p|^mR^{m(p-1)}-1)\leq \frac{1}{R^m} \left( (m-2k-2)\log R-2\log|h(0)|\right).
		\end{align}
		If $p> 1$, then
		\[
		R^{m(p-1)}\longrightarrow\infty \quad \text{as } R\to\infty.
		\]
		Hence,
		\[
		\alpha \left((1-\epsilon)(1-\delta)^m|a_p|^mR^{m(p-1)}-1\right)\longrightarrow\infty,
		\]
		which implies that the left hand side of \eqref{eqbound22} is unbounded as $R\to\infty,$ while the right hand side tends to $0$. This contradicts \eqref{eqbound22}. Therefore, $p\leq1$. Thus, $\phi(z)=a_0+a_1z.$ Clearly, $a_0=\phi(0)$ and $a_1=\lambda(\text{say}).$ If $p=1$, then \eqref{eqbound22} becomes
		\begin{eqnarray*}
			&&(2k-m+2)\log R+2\log|h(0)|
			+\alpha R^m\left((1-\epsilon)(1-\delta)^m|\lambda|^m-1\right)
			<\log M_{\epsilon}'\\
			&&\implies \alpha \left((1-\epsilon)(1-\delta)^m|\lambda|^m-1\right)\leq \frac {1}{R^m} (m-2k-2)\log R-2\log|h(0)|.
		\end{eqnarray*}
		Since $R^m\to\infty$ as $R\to\infty$, the above inequality can hold only if
		\[
		(1-\epsilon)(1-\delta)^m|\lambda|^m-1\leq0,
		\]
		that is,
		\[
		|\lambda|\leq\frac{1}{(1-\epsilon)^{\frac 1m}(1-\delta)}.
		\]
		Since $\epsilon$ and $\delta$ are arbitrary positive numbers, then we conclude that $|\lambda|\leq1.$
		Therefore, $
		\phi(z)=\phi(0)+\lambda z,~|\lambda|\leq1.$ 
		Finally, we show that 
		$$\sup_{z\in\mathbb C} \{|\psi(z)|^2 e^{\alpha(|\phi(z)|^m-|z|^m)}: z\in\mathbb C\}<\infty.$$
		Note that, for $|\lambda|\in(0,1],~|\phi(z)|=|\phi(0)+\lambda z|\to\infty$ as $|z|\to\infty.$ Then by applying the asymptotic expansion we have 
		\begin{eqnarray}\label{eqon45}
			\frac{E_{\frac {2}{m},\frac {2}{m}}(\alpha^{\frac 2m} |\phi(z)|^2)}{E_{\frac {2}{m},\frac {2}{m}}(\alpha^{\frac 2m} |z|^2)}\asymp \left|\frac{\phi(z)}{z}\right|^{m-2}e^{\alpha(|\phi(z)|^m-|z|^m)}\quad\quad \text{as}~|z|\to\infty.
		\end{eqnarray}
		From \eqref{Feq_1}, there exists $K'>0$ such that 
		\begin{eqnarray}\label{eqlab_01}
			|\psi(z)|^2\left|\frac{\phi(z)}{z}\right|^{m-2}e^{\alpha(|\phi(z)|^m-|z|^m)}<K',
		\end{eqnarray}
		as $|z|\to\infty.$
		Then it follows from \eqref{eqlab_01} that for any $|\lambda|\in(0,1]$ there exists $K>0$ such that $$ |\psi(z)|^2 e^{\alpha(|\phi(z)|^m-|z|^m)}<K$$ as $|z|\to\infty,$ and so there exists $R_2>0$ such that  $$ |\psi(z)|^2 e^{\alpha(|\phi(z)|^m-|z|^m)}<K \text{ for all }|z|>R_2.$$ Since $\{z:|z|\leq R_2\}$ is compact, the function $|\psi(z)|^2 e^{\alpha(|\phi(z)|^m-|z|^m)}$ is also bounded on $\{z:|z|\leq R_2\}$. Therefore, $$\sup_{z\in\mathbb C} \{|\psi(z)|^2 e^{\alpha(|\phi(z)|^m-|z|^m)}: z\in\mathbb C\}<\infty.$$
		Since $\psi\in F^2_{\alpha,m}(\mathbb C),$ for the case $\lambda=0,$ it follows from \eqref{eq_9898} that 
		$$|\psi(z)|^2 e^{\alpha(|\phi(0)|^m-|z|^m)}\to0~~\,\,\,\,\text{as}~ |z|\to\infty,$$ and hence the result follows. 
	\end{proof}

	The next result is a useful consequence of the preceding discussion.
	
	\begin{cor}\label{Rm1}
		Let $\phi(z)=\phi(0)+\lambda z$ with $|\lambda|=1$ and  suppose that $\psi$ has a zero of order at least one. Then $C_{\psi,\phi}$ is not bounded on $F^2_{\alpha, m}(\mathbb C).$
	\end{cor}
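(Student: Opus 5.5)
The plan is to argue by contradiction using the necessary condition in Theorem \ref{Th_1} together with Jensen's formula. The extra $\log R$ that a zero of $\psi$ contributes to the circle averages of $\log|\psi|$ cannot be absorbed when $|\lambda|=1$, because then the exponential factor gives no decay.

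Suppose $C_{\psi,\phi}$ is bounded. Then $\psi\not\equiv 0$, and by the implication (i)$\Rightarrow$(ii) of Theorem \ref{Th_1}, which uses only boundedness, there is $M>0$ with
\[
2\log|\psi(z)|+\alpha\bigl(|\phi(0)+\lambda z|^m-|z|^m\bigr)\le \log M \qquad (z\in\mathbb C).
\]
We need $\psi\in F^2_{\alpha,m}(\mathbb C)$ only in the sense that $\psi=C_{\psi,\phi}1$ lies in the space. Let $z_0$ be a zero of $\psi$. Put $z=Re^{i\theta}$ and average over $\theta\in[-\pi,\pi]$.

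\textbf{Step 1 (the $\psi$ term).} We bound the average of $\log|\psi|$ from below by Jensen's formula. If $\psi(0)\neq0$, then for $R>|z_0|$,
\[
\frac1{2\pi}\int_{-\pi}^{\pi}\log|\psi(Re^{i\theta})|\,d\theta=\log|\psi(0)|+\sum_{|a_j|<R}\log\frac{R}{|a_j|}\ \ge\ \log|\psi(0)|+\log\frac{R}{|z_0|}.
\]
If $\psi(0)=0$, write $\psi(z)=z^kh(z)$ with $k\ge1$ and $h(0)\ne0$, as in the proof of Theorem \ref{Th_1}. Jensen's inequality for $h$ then gives the average $\ge k\log R+\log|h(0)|$. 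In either case the average is at least $\log R-c$ for some constant $c$ and all large $R$.

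\textbf{Step 2 (the exponential term; the main obstacle).} We must show that the average of $|\phi(0)+\lambda Re^{i\theta}|^m-R^m$ is bounded below, indeed nonnegative, for large $R$. Here the crude bound $(1-\delta)R\le|\phi|$ from the proof of Theorem \ref{Th_1} is useless: it loses a term of order $R^m$. Instead, set $a=\phi(0)\bar\lambda$, so that $|\phi(0)+\lambda Re^{i\theta}|=R\,|1+(a/R)e^{-i\theta}|$. For $R>|a|$ the function $g(\zeta)=(1+\zeta)^{m/2}$, principal branch, is holomorphic on $|\zeta|<1$. Hence $|1+\zeta|^m=|g(\zeta)|^2$ is subharmonic there, and its mean over the circle $|\zeta|=|a|/R$ is at least $|g(0)|^2=1$. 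Therefore
\[
\frac1{2\pi}\int_{-\pi}^{\pi}|\phi(0)+\lambda Re^{i\theta}|^m\,d\theta\ \ge\ R^m .
\]
A second-order Taylor expansion gives an alternative check: the mean equals $R^m\bigl(1+\tfrac{m^2}{4}|a|^2R^{-2}+\cdots\bigr)$.

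\textbf{Step 3 (conclusion).} Averaging the displayed inequality and inserting Steps 1 and 2 gives $2\log R-2c\le\log M$ for all large $R$, which is impossible. Hence $C_{\psi,\phi}$ is unbounded. The case $\phi(0)=0$, a pure rotation, is included, since then the exponential term vanishes identically.
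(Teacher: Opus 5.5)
Your proof is correct and has the same skeleton as the paper's. You assume boundedness, invoke $\sup_z|\psi(z)|^2e^{\alpha(|\phi(z)|^m-|z|^m)}<\infty$ from Theorem~\ref{Th_1}, take logarithms, average over $|z|=R$, and let the $\log R$ coming from the zero of $\psi$ produce the contradiction.

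It differs from the paper in two genuine ways.

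\textbf{Location of the zero.} The paper writes $\psi(z)=z^kh(z)$ with $k\ge1$ and $h(0)\ne0$, so it tacitly assumes the zero is at the origin. Your use of the full Jensen formula, giving $\frac{1}{2\pi}\int_{-\pi}^{\pi}\log|\psi(Re^{i\theta})|\,d\theta\ge\log|\psi(0)|+\log(R/|z_0|)$ when $\psi(0)\neq0$, handles a zero at an arbitrary point $z_0$. That is what the statement literally asserts.

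\textbf{The exponential term.} The paper splits into two cases.
\begin{itemize}
\item For $0<m\le2$ it uses $|\phi(Re^{i\theta})|^m-R^m=mR^m\Re\bigl(\phi(0)/(\lambda Re^{i\theta})\bigr)+O(R^{m-2})$; the leading term averages to zero and the remainder stays bounded.
\item For $m>2$ it uses the power-mean inequality $\frac{1}{2\pi}\int_{-\pi}^{\pi}|\phi(Re^{i\theta})|^m\,d\theta\ge(|\phi(0)|^2+R^2)^{m/2}\ge R^m$.
\end{itemize}
Your observation that $|1+\zeta|^m=|(1+\zeta)^{m/2}|^2$ is subharmonic on the unit disc gives $\frac{1}{2\pi}\int_{-\pi}^{\pi}|\phi(Re^{i\theta})|^m\,d\theta\ge R^m$ for every $m>0$ at once. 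This is cleaner, and it yields the exact sign rather than only a bounded error term.

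One small wording point: $\psi\not\equiv0$ does not follow from boundedness, since the zero operator is bounded. It is an implicit hypothesis, because a zero ``of order at least one'' is meant to have finite order.
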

	\begin{proof}
		Assume, to the contrary, that $C_{\psi,\phi}$ is bounded.
		Then by Theorem \ref{Th_1}, there exists $M>0$ such that 
		$$|\psi(z)|^2 e^{\alpha(|\phi(z)|^m-|z|^m)}<M,~~\forall ~z\in\mathbb C.$$ Taking logarithm both side, we get 
		$$2\log|\psi(z)|+\alpha(|\phi(z)|^m-|z|^m)<\log M.$$
		Writing $\psi(z)=z^kh(z),$ where $k\geq1$ and $h(0)\neq0,$ taking logarithms, we obtain
		$$2k\log|z|+2\log|h(z)|+\alpha(|\phi(z)|^m-|z|^m)<\log M.$$
		For any $R>0,$ putting $z=R e^{i\theta}$ and integrating with respect to $\theta$ on $[-\pi,\pi]$ yields
		\begin{eqnarray}\label{NR1}
			2k\log R+2\log |h(0)|+\frac{\alpha}{2\pi}\int_{-\pi}^{\pi}(|\phi(Re^{i\theta})|^m-R^m) d\theta<\log M.
		\end{eqnarray}
		Since $|\lambda|=1,$ we have $|\phi(Re^{i\theta})|^m-R^m=mR^m\Re\left(\frac{\phi(0)}{\lambda Re^{i\theta}}\right)+O(R^{m-2}),$ where $\Re(\cdot)$ is the real part. 
		Since \[\int_{-\pi}^\pi\Re\left(\frac{\phi(0)}{\lambda Re^{i\theta}}\right)d\theta=0,\] it follows from \eqref{NR1} that for $0<m\leq2$,
		\begin{eqnarray}\label{eq_comp_41}
			2k\log R+2\log |h(0)|+O(R^{m-2}) <\log M.
		\end{eqnarray}
		As $R\to\infty,$ left hand side of \eqref{eq_comp_41} cannot hold, yielding a contradiction. \\
		On the other hand, if $m>2,$ Jensen's inequality gives
		\begin{eqnarray*}
			\frac{1}{2\pi}\int_{-\pi}^{\pi}|\phi(Re^{i\theta})|^md\theta
			&\geq&\left(\frac{1}{2\pi}\int_{-\pi}^{\pi}|\phi(Re^{i\theta})|^2  d\theta\right)^{\frac m2}\\
			&=&(|\phi(0)|^2+R^2)^{\frac m2}
			\geq R^m.
		\end{eqnarray*}
		It follows from \eqref{NR1} that 
		\begin{eqnarray}\label{eq6571}
			2k\log R+2\log |h(0)|<\log M,
		\end{eqnarray}
		which again leads to a contradiction as $R\to\infty.$ Therefore, $C_{\psi,\phi}$ is not bounded on $F^2_{\alpha, m}(\mathbb C).$\\
	\end{proof}
	
	\begin{remark}
		In contrast, when $|\lambda|<1,$ we construct a bounded weighted composition operator $C_{\psi,\phi}$ for which $\psi$ has a zero of order at least $1.$ Indeed, let $\psi(z)=z^k,$ where $k\geq 1$ and $\phi(z)=\lambda z.$ A direct calculation shows $$\sup_{z \in \mathbb C}|\psi(z)|^2 e^{\alpha(|\phi(0)|^m-|z|^m)}=\sup_{z \in \mathbb C}|z|^{2k} e^{-\alpha(1-|\lambda|^m)|z|^m}<\infty.$$
		Thus, the weighted composition operator $C_{\psi,\phi}$ is bounded on $F^2_{\alpha, m}(\mathbb C).$
	\end{remark}

	\section {Compactness}\label{S2}
	The main objective of this section is to characterize compact weighted composition operators on $F^2_{\alpha, m}(\mathbb C).$ We begin by proving the following characterization theorem.

	\begin{proof}[Proof of Theorem \ref{Th_2}]
		We first establish that (i) implies (ii).
		Suppose that $C_{\psi,\phi}$ is compact on $F^2_{\alpha, m}(\mathbb C).$ Then Theorem \ref{Th_1} implies that $\phi(z)=\phi(0)+\lambda z,~|\lambda|\leq1.$ By the asymptotic expansion 
		$$\|k^{\alpha,m}_z\|^2_{\alpha,m}= E_{\frac {2}{m},\frac {2}{m}}(\alpha^{\frac 2m} |z|^2)\asymp |z|^{m-2}e^{\alpha|z|^m}.$$
		It is easy to observe that
		the normalized kernels $\{\hat k^{\alpha,m}_z\}$ converge to $0$ weakly as $|z|\to\infty.$ Since $C_{\psi,\phi}$ is compact, so is 
		$C_{\psi,\phi}^\ast$. Thus, $$\|C_{\psi,\phi}^\ast \hat k_z^{\alpha,m}\|_{\alpha,m}^2=|\psi(z)|^2\frac{E_{\frac {2}{m},\frac {2}{m}}(\alpha^{\frac 2m} |\phi(z)|^2)}{E_{\frac {2}{m},\frac {2}{m}}(\alpha^{\frac 2m} |z|^2)}\to 0\,\,\,\,\,\,\,\,\,\text{as}~ |z|\to\infty.$$  It follows from \eqref{eqon45} that 
		\begin{eqnarray*}
			\lim_{|z|\to\infty} |\psi(z)|^2\left|\frac{\phi(z)}{z}\right|^{m-2}e^{\alpha(|\phi(z)|^m-|z|^m)}=0
		\end{eqnarray*}
		Note that, $\lim\limits_{|z|\to\infty}\left|\frac{\phi(z)}{z}\right|^{m-2}=|\lambda|^{m-2}.$ Thus, for $|\lambda|\in(0,1],$ 
		$$\lim_{|z|\to\infty} |\psi(z)|^2e^{\alpha(|\phi(z)|^m-|z|^m)}=0.$$ Since $\psi\in F^2_{\alpha,m}(\mathbb C),$ for the case $\lambda=0,$ it follows from \eqref{eq_9898} that $$|\psi(z)|^2 e^{\alpha(|\phi(0)|^m-|z|^m)}\to0 \,\,\,\,\,\,\text{as}~|z|\to\infty.$$
		Let $\epsilon>0.$ Then there exists $R>0$ such that \begin{eqnarray}\label{eqcom1}
			|\psi(z)|^2e^{\alpha(|\phi(z)|^m-|z|^m)}<\epsilon
		\end{eqnarray}
		for all $|z|\geq R.$ Since $\psi$ is not identically $0,$ there is integer $k\geq0$ and an entire function $h$ with $h(0)\neq0$ such that $\psi(z)=z^k h(z).$ Thus, 
		\begin{eqnarray}\label{eqcomp_2}
			|z|^{2k}|h(z)|^2e^{\alpha(|\phi(z)|^m-|z|^m)}<\epsilon
		\end{eqnarray}
		for all $|z|\geq R.$ Taking logarithm both side we get
		\begin{eqnarray}\label{eqbbb45}
			2k\log |z|+2\log |h(z)|+\alpha (|\phi(z)|^m-|z|^m)<\log \epsilon.
		\end{eqnarray}
		Now, setting $z=R e^{i\theta}$ and integrating with respect to $\theta$ on $[-\pi,\pi],$ Jensen's formula gives
		\begin{eqnarray}\label{eqcomp3}
			2k\log R+2\log |h(0)|+\frac{\alpha}{2\pi}\int_{-\pi}^{\pi}(|\phi(Re^{i\theta})|^m-R^m) d\theta<\log \epsilon.
		\end{eqnarray}
		We claim that $|\lambda|<1.$ Suppose, to the contrary, that $|\lambda|=1.$ If $k\geq 1,$ then Corollary \ref{Rm1} implies that $C_{\psi,\phi}$ cannot be bounded, contradicting the compactness of $C_{\psi,\phi}.$ Hence, we only consider the case $k=0.$
		We have the following expression
		$$|\phi(Re^{i\theta})|^m-R^m=mR^m\Re\left(\frac{\phi(0)}{\lambda Re^{i\theta}}\right)+O(R^{m-2}).$$ We distinguish two cases.\\
		Case 1: Let $0<m\leq2.$ Since $\int_{-\pi}^\pi\Re\left(\frac{\phi(0)}{\lambda Re^{i\theta}}\right)d\theta=0,$ then \eqref{eqcomp3} reduces to
		\begin{eqnarray}\label{ewcnew}
			2\log|h(0)|+O(R^{m-2})<\log\epsilon.
		\end{eqnarray}
		If $0<m<2$ then
		\[
		2\log|h(0)|+O(R^{m-2})\longrightarrow 2\log|h(0)|\quad \text{as}~R\to\infty.
		\]
		Since $h(0)\neq0$, choose $0<\epsilon<|h(0)|^2$. Then
		\[
		\log\epsilon<2\log|h(0)|.
		\]
		As the left hand side of \eqref{ewcnew} converges to the value $2\log|h(0)|$, which is strictly greater than $\log\epsilon$, it follows from the definition of the limit that there exists $R_0>0$ such that
		\[
		2\log|h(0)|+O(R^{m-2})>\log\epsilon
		\]
		for all $R\geq R_0$. This contradicts \eqref{ewcnew}. 
		If $m=2$ then equation \eqref{eqcomp3} reduces to
		\[
		2\log|h(0)|+\alpha|\phi(0)|^2<\log\epsilon.
		\]
		Since $h(0)\neq0$, choose
		\[
		0<\epsilon<|h(0)|^2e^{\alpha|\phi(0)|^2}.
		\]
		Then
		\[
		\log\epsilon<
		2\log|h(0)|+\alpha|\phi(0)|^2,
		\]
		which is again a contradiction. Therefore, $|\lambda|<1$.\\
		Case 2:  Let $m>2.$ Then by Jensen's inequality,
		\begin{eqnarray*}
			\frac{1}{2\pi}\int_{-\pi}^{\pi}|\phi(Re^{i\theta})|^md\theta
			\geq R^m.
		\end{eqnarray*}
		It follows from \eqref{eqcomp3} that 
		\begin{eqnarray}\label{eq657}
			2\log |h(0)|<\log \epsilon.
		\end{eqnarray}
		Since $h(0)\neq0,$ choose $0<\epsilon<|h(0)|^2.$ Then $\log\epsilon<2\log|h(0)|,$ which contradicts the above inequality. Thus, $|\lambda|<1.$

		Next, we prove (ii) implies (i).
		If $\lambda=0$ then as in the proof of Theorem \ref{Th_1}, the operator $C_{\psi,\phi}$ has rank one, hence compact. Consider $\lambda\neq 0.$ Let $\{f_n\}$ be a sequence in $F^2_{\alpha, m}(\mathbb C)$ converging weakly to $0.$ It is easy to observe that $\{f_n\}$ is bounded in norm and converges to $0$ uniformly on compact subsets of $\mathbb C.$ We need to show that $\|C_{\psi,\phi}f_n\|_{\alpha,m}\to 0$ as $n\to \infty.$ For $w\in\mathbb C,$ define 
		$$F_{\alpha,m}(w)=|\lambda|^{-2}|\psi(\phi^{-1}(w))|^2 e^{\alpha(|w|^m-|\phi^{-1}(w)|^m)}.$$ Since $\lim\limits_{|w|\to\infty}|\phi^{-1}(w)|=\infty,$ it follows that $\lim\limits_{|w|\to\infty}F_{\alpha,m}(w)=0.$ Now, for any $R>0,$ we compute
		\begin{eqnarray*}
			\|C_{\psi,\phi}f_n\|_{\alpha,m}^2
			&=&\int_{\mathbb C}|\psi(z)|^2|f_n(\phi(z))|^2\frac{m\alpha^{\frac 2m}}{2\pi \Gamma(\frac 2m)}e^{-\alpha |z|^m} dA(z)\\
			&=&\frac{m\alpha^{\frac 2m}}{2\pi \Gamma(\frac 2m)}\int_{\mathbb C}|\lambda|^2F_{\alpha,m}(\phi(z))|f_n(\phi(z))|^2e^{-\alpha |\phi(z)|^m} dA(z).
		\end{eqnarray*}
		By the change of variables $w=\phi(z)=\phi(0)+\lambda z,$ we have
		\begin{eqnarray*}
			\|C_{\psi,\phi}f_n\|_{\alpha,m}^2&=&\frac{m\alpha^{\frac 2m}}{2\pi \Gamma(\frac 2m)}\int_{\mathbb C}F_{\alpha,m}(w)|f_n(w)|^2e^{-\alpha |w|^m} dA(w)\\
			&\leq&\|F_{\alpha,m}\|_{\infty}\int_{|w|<R}|f_n(w)|^2d\mu_{\alpha, m}(w)+(\sup_{|w|>R}F_{\alpha,m}(w))\|f_n\|^2_{\alpha, m},
		\end{eqnarray*}
		where $\|F_{\alpha,m}\|_{\infty}=\sup\limits_{w\in\mathbb C}F_{\alpha,m}(w).$
		If possible suppose that $\lim\limits_{n\to\infty}\|C_{\psi,\phi}f_n\|_{\alpha,m}= l>0.$ Since $\{f_n\}$ is bounded, there exists $K$ such that $\|f_n\|_{\alpha,m}<K$ for all $n\in\mathbb N.$ Consider $\epsilon=\frac{l^2}{4k^2}.$ As $\lim\limits_{|w|\to\infty}F_{\alpha,m}(w)=0,$ there exists $R>0$ such that $|F_{\alpha,m}(w)|<\frac{l^2}{4k^2}$ for all $|w|>R.$ Thus,
		\begin{eqnarray*}
			\|C_{\psi,\phi}f_n\|_{\alpha,m}^2
			&\leq& \|F_{\alpha,m}\|_{\infty}\int_{|w|<R}|f_n(w)|^2d\mu_{\alpha, m}(w)+\frac{l^2}{4k^2}\|f_n\|^2_{\alpha, m}\\
			&\leq&\|F_{\alpha,m}\|_{\infty}\int_{|w|<R}|f_n(w)|^2d\mu_{\alpha, m}(w)+\frac{l^2}{4}.
		\end{eqnarray*}
		When $n\to\infty,$ $\|C_{\psi,\phi}f_n\|_{\alpha,m}\leq\frac{l}{2}.$ This is a contradiction, so we conclude that $$\|C_{\psi,\phi}f_n\|_{\alpha,m}\to 0 ~\text{as}~ n\to \infty.$$
		This completes the proof.
	\end{proof}
	
	\section {Essential norm and Hilbert-Schmidt class}\label{S3}
	We now turn our attention to the essential norm of weighted composition operators on $F^2_{\alpha, m}(\mathbb C).$ We begin with the following lemma, which plays a crucial role in the proof of the main result.
	For an entire function $f(z)=\sum\limits_{k=0}^{\infty}a_kz^k,$ define $R_nf(z)=\sum\limits_{k=n}^{\infty}a_kz^k,$ acting on $F^2_{\alpha, m}(\mathbb C)$ and $K_n=I-R_n,$ where $I$ denotes the identity operator on $F^2_{\alpha, m}(\mathbb C).$

	\begin{lemma}\label{Lmmess1}
		Let $f\in F^2_{\alpha, m}(\mathbb C).$ Then there exists positive constant $C$ such that
		\begin{eqnarray*}
			|R_nf(z)|\leq C\|f\|_{\alpha,m}\sum\limits_{j=n}^{\infty}\frac{|w|^j\alpha^{\frac{j-1}{m}}}{\sqrt{\Gamma\left(\frac{2j+2}{m}\right)}},
		\end{eqnarray*}
		for all $z\in\mathbb C.$
	\end{lemma}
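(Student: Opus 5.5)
Note first that the right-hand side is written in terms of $|w|$ while the left is evaluated at $z$; I read $w$ as $z$ (a typo), so the target is
\[
|R_nf(z)|\le C\|f\|_{\alpha,m}\sum_{j\ge n}\frac{|z|^j\alpha^{(j-1)/m}}{\sqrt{\Gamma((2j+2)/m)}}.
\]
The plan is to expand $f$ in the orthonormal basis $\{e_j\}$ from the introduction and estimate each Taylor coefficient by the norm of $f$, term by term.

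First, write $f=\sum_{j\ge0}\langle f,e_j\rangle_{\alpha,m}e_j$. Because the $e_j$ are scalar multiples of the monomials $z^j$ and norm convergence in $F^2_{\alpha,m}(\mathbb C)$ implies locally uniform convergence, comparing coefficients gives
\[
a_j=\langle f,e_j\rangle_{\alpha,m}\sqrt{\frac{\alpha^{2j/m}\Gamma(\frac2m)}{\Gamma(\frac{2j+2}{m})}}.
\]
The pointwise convergence itself comes from the reproducing kernel. Alternatively, one can compute directly that
\[
a_j=\frac{\langle f,z^j\rangle_{\alpha,m}}{\|z^j\|^2_{\alpha,m}},
\]
using orthogonality of the monomials, which is already noted in the paper.

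Second, apply Cauchy--Schwarz or Bessel: $|\langle f,e_j\rangle_{\alpha,m}|\le\|f\|_{\alpha,m}$. This yields
\[
|a_j|\le \|f\|_{\alpha,m}\sqrt{\Gamma(\tfrac2m)}\,\frac{\alpha^{j/m}}{\sqrt{\Gamma(\frac{2j+2}{m})}}.
\]

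Third, apply the triangle inequality to $R_nf(z)=\sum_{j\ge n}a_jz^j$:
\[
|R_nf(z)|\le \sqrt{\Gamma(\tfrac2m)}\,\|f\|_{\alpha,m}\sum_{j\ge n}\frac{\alpha^{j/m}|z|^j}{\sqrt{\Gamma(\frac{2j+2}{m})}}.
\]
Writing $\alpha^{j/m}=\alpha^{1/m}\alpha^{(j-1)/m}$ gives exactly the stated form with $C=\sqrt{\Gamma(2/m)}\,\alpha^{1/m}$, which is independent of $n$, $z$ and $f$.

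It remains to check that the series converges, so that the bound is meaningful. Stirling's formula gives $\Gamma(\frac{2j+2}{m})^{1/(2j)}\to\infty$, so the root test shows convergence for every $z$. The same fact justifies the termwise manipulations, since $\sum_j |a_j||z|^j$ converges absolutely.

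There is no serious obstacle here. The only points needing care are the coefficient identification (power series versus $L^2$ expansion) and the convergence of the majorant series, both of which follow from the Gamma growth.
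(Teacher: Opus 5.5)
Your argument is correct and gives exactly the paper's constant $C=\alpha^{1/m}\sqrt{\Gamma(2/m)}$. Reading $w$ as $z$ is right; the paper's own proof uses $w$ for the evaluation point. Both proofs rest on the same coefficientwise Cauchy--Schwarz bound $|a_j|\,\|z^j\|_{\alpha,m}\le\|f\|_{\alpha,m}$, but they reach it differently.

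The paper writes $R_nf(w)$ as an integral of $f$ against the reproducing kernel. It then moves $R_n$ onto the kernel, which tacitly uses that $R_n$ is self-adjoint (it is the orthogonal projection onto the closed span of $\{e_j\}_{j\ge n}$). After that it bounds the truncated kernel termwise, pulls the sum out of the integral, applies H\"older to $\int_{\mathbb C}|f(z)||z|^je^{-\alpha|z|^m}\,dA(z)$, and evaluates the moments $\int_{\mathbb C}|z|^{2j}e^{-\alpha|z|^m}\,dA(z)$ in polar coordinates.

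Your route gets there directly from Bessel's inequality and the norms $\|z^j\|_{\alpha,m}$ already computed in the introduction. It therefore skips the projection, the adjoint step and the moment computation. Your alternative identification $a_j=\langle f,z^j\rangle_{\alpha,m}/\|z^j\|^2_{\alpha,m}$ does not even need completeness of $\{e_j\}$. It should, however, be justified by integrating over $|z|<R$, where the Taylor series converges uniformly, and then letting $R\to\infty$ by dominated convergence (note $f\bar z^j\in L^1(\mu_{\alpha,m})$). Orthogonality of monomials alone does not cover it, since $f$ is not a finite combination of monomials.

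Your root-test check of the majorant series is a genuine addition. The paper later uses, without comment, that its tail tends to $0$ in the proof of Theorem~\ref{Th_3}.
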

	\begin{proof}
		Define $P_{\alpha,m}:L^2(\mu_{\alpha,m})\to F^2_{\alpha, m}(\mathbb C)$ such that 
		$$P_{\alpha,m}f(w)=\frac{m\alpha^{\frac 2m}}{2\pi\Gamma(\frac 2m)}\int_{\mathbb C}f(z)\sum\limits_{n=0}^{\infty}\frac{\alpha^{\frac{2n}{m}}\Gamma(\frac 2m)}{\Gamma\left(\frac{2n+2}{m}\right)}(w\bar z)^ne^{-\alpha |z|^m}dA(z),$$ 
		where $P_{\alpha,m}$ is an orthogonal projection on $F^2_{\alpha, m}(\mathbb C).$ Then 
		\begin{eqnarray*}
			R_nf(w)=P_{\alpha,m}R_nf(w)&=&\frac{m\alpha^{\frac 2m}}{2\pi\Gamma(\frac 2m)}\int_{\mathbb C}R_nf(z)\left(\sum\limits_{n=0}^{\infty}\frac{\alpha^{\frac{2n}{m}}\Gamma(\frac 2m)}{\Gamma\left(\frac{2n+2}{m}\right)}(w\bar z)^n\right)e^{-\alpha |z|^m}dA(z)\\
			&=&\frac{m\alpha^{\frac 2m}}{2\pi\Gamma(\frac 2m)}\int_{\mathbb C}f(z) R_n\left(\sum\limits_{n=0}^{\infty}\frac{\alpha^{\frac{2n}{m}}\Gamma(\frac 2m)}{\Gamma\left(\frac{2n+2}{m}\right)}(w\bar z)^n\right)e^{-\alpha |z|^m}dA(z).
		\end{eqnarray*}
		Now,
		$$\left|R_n\left(\sum\limits_{n=0}^{\infty}\frac{\alpha^{\frac{2n}{m}}\Gamma(\frac 2m)}{\Gamma\left(\frac{2n+2}{m}\right)}(w\bar z)^n\right)\right|\leq \sum\limits_{j=n}^{\infty}\frac{\alpha^{\frac{2j}{m}}\Gamma(\frac 2m)}{\Gamma\left(\frac{2j+2}{m}\right)}|w|^j| z|^j.$$ Thus,
		$$|R_nf(w)|\leq\frac{m\alpha^{\frac 2m}}{2\pi\Gamma(\frac 2m)}\sum\limits_{j=n}^{\infty}\frac{\alpha^{\frac{2j}{m}}\Gamma(\frac 2m)}{\Gamma\left(\frac{2j+2}{m}\right)}|w|^j\int_{\mathbb C}f(z)|z|^j e^{-\alpha |z|^m}dA(z).$$ Using Holder's inequality we get
		$$\int_{\mathbb C}f(z)|z|^j e^{-\alpha |z|^m}dA(z)\leq \left(\int_{\mathbb C}|f(z)|^2e^{-\alpha |z|^m}dA(z)\right)^\frac 12\left(\int_{\mathbb C}|z|^{2j} e^{-\alpha |z|^m}dA(z)\right)^{\frac12}.$$ It follows that
		\begin{eqnarray*}
			|R_nf(w)|
			&\leq&\left(\frac{m\alpha^{\frac 2m}}{2\pi\Gamma(\frac 2m)}\right)^\frac 12\|f\|_{\alpha,m}\sum\limits_{j=n}^{\infty}\frac{\alpha^{\frac{2j}{m}}\Gamma(\frac 2m)}{\Gamma\left(\frac{2j+2}{m}\right)}|w|^j\left(\int_{\mathbb C}|z|^{2j} e^{-\alpha |z|^m}dA(z)\right)^{\frac12}.
		\end{eqnarray*}
		Integration in polar coordinates gives 
		$$\left(\int_{\mathbb C}|z|^{2j} e^{-\alpha |z|^m}dA(z)\right)^{\frac 12}=\left(\frac{2\pi}{m}\right)^{\frac 12}\alpha^{-\frac{j+1}{m}}\Gamma\left(\frac{2j+2}{m}\right)^{\frac 12}.$$
		Therefore, 
		$$ |R_nf(w)|\leq C\|f\|_{\alpha,m}\sum\limits_{j=n}^{\infty}\frac{|w|^j\alpha^{\frac{j-1}{m}}}{\sqrt{\Gamma\left(\frac{2j+2}{m}\right)}},$$ where $C=\alpha^{\frac 1m}\sqrt{\Gamma(\frac 2m)}.$
	\end{proof}

	We are now in a position to establish the main result of this section. We first recall the following characterization of the essential norm. The subsequent theorem provides an asymptotic characterization of the essential norm of bounded weighted composition operators on $F^2_{\alpha, m}(\mathbb C).$
	\begin{lemma}\cite[Prop. 5.1]{S_AM_1987}\label{L1}
		Suppose $T$ is a bounded linear operator on a Hilbert space $\mathcal H.$ Let $\{K_n\}$ be a sequence of a compact self-adjoint operator on $\mathcal H,$ and $R_n=I-K_n.$ Suppose $\|R_n\|=1$ for each $n,$ and $\|R_nx\|_\mathcal H \to 0$ for each $x \in \mathcal H.$ Then $\|T\|_e=\lim\limits_{n \to \infty}\|TR_n\|.$
	\end{lemma}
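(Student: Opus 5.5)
The plan is to prove two inequalities: $\|T\|_e\le\liminf_{n}\|TR_n\|$ and $\limsup_n\|TR_n\|\le\|T\|_e$. Together they show that the limit exists and equals $\|T\|_e$. Since $K_n$ is self-adjoint and $I$ is self-adjoint, $R_n=I-K_n$ is self-adjoint as well, and this will be used in the second step.

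\emph{Upper bound.} For each $n$ write $T=TR_n+TK_n$. Because $K_n$ is compact, so is $TK_n$. Hence, by the definition of the essential norm,
\[
\|T\|_e\le \|T-TK_n\|=\|TR_n\|.
\]
Taking $\liminf$ over $n$ gives the first inequality.

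\emph{Lower bound.} Fix an arbitrary compact operator $K$ on $\mathcal H$. Then
\[
\|TR_n\|\le \|(T-K)R_n\|+\|KR_n\|\le \|T-K\|\,\|R_n\|+\|KR_n\|=\|T-K\|+\|KR_n\|,
\]
using $\|R_n\|=1$. It therefore suffices to show $\|KR_n\|\to0$. By self-adjointness, $\|KR_n\|=\|(KR_n)^\ast\|=\|R_nK^\ast\|$, and $K^\ast$ is compact. This is the main point of the argument.

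The family $\{R_n\}$ is uniformly bounded (norm $1$) and converges strongly to $0$, so it converges to $0$ uniformly on every relatively compact subset of $\mathcal H$. To see this, cover the set by finitely many balls of radius $\epsilon$ and use the equicontinuity supplied by $\|R_n\|=1$. Apply this to the relatively compact set $K^\ast(\{x:\|x\|\le1\})$. It gives $\sup_{\|x\|\le1}\|R_nK^\ast x\|\to0$, that is, $\|R_nK^\ast\|\to0$.

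Consequently $\limsup_n\|TR_n\|\le\|T-K\|$ for every compact $K$. Taking the infimum over $K$ yields $\limsup_n\|TR_n\|\le\|T\|_e$. Combined with the upper bound, this shows that $\lim_{n\to\infty}\|TR_n\|$ exists and equals $\|T\|_e$.
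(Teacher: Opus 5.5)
Your proof is correct and complete. The paper does not prove this lemma itself; it only cites Shapiro's Proposition~5.1. Your argument is essentially the standard proof from that source: the upper bound comes from $\|T\|_e\le\|T-TK_n\|=\|TR_n\|$, and the lower bound from $\|TR_n\|\le\|T-K\|+\|R_nK^\ast\|$, where $\|R_nK^\ast\|\to0$ because $R_n$ is self-adjoint and a norm-bounded, strongly null sequence converges to $0$ uniformly on compact sets.
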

	\begin{proof}[Proof of Theorem \ref{Th_3}]
		For $w\in\mathbb C,$ define 
		$$F_{\alpha,m}(w)=|\lambda|^{-2}|\psi(\phi^{-1}(w))|^2 e^{\alpha(|w|^m-|\phi^{-1}(w)|^m)}.$$ As $C_{\psi,\phi}$ is bounded, it follows from Theorem \ref{Th_1} that $$\sup_{z\in\mathbb C} \{|\psi(z)|^2 e^{\alpha(|\phi(z)|^m-|z|^m)}: z\in\mathbb C\}<\infty.$$ Thus, $\|F_{\alpha,m}\|_{\infty}=\sup\limits_{w\in\mathbb C}F_{\alpha,m}(w)<\infty.$
		Let $r>0$ and $f\in F^2_{\alpha, m}(\mathbb C)$ with $\|f\|_{\alpha,m}\leq 1.$ 
		Since $R_nf\in F^2_{\alpha, m}(\mathbb C)$ for any positive integer $n,$ then 
		\begin{eqnarray*}
			\|C_{\psi,\phi}R_nf\|_{\alpha,m}^2
			&=&\int_{\mathbb C}|\psi(z)|^2|R_nf(\phi(z))|^2\frac{m\alpha^{\frac 2m}}{2\pi \Gamma(\frac 2m)}e^{-\alpha |z|^m} dA(z)\\
			&=&\frac{m\alpha^{\frac 2m}}{2\pi \Gamma(\frac 2m)}\int_{\mathbb C}|\lambda|^2F_{\alpha,m}(\phi(z))|R_nf(\phi(z))|^2e^{-\alpha |\phi(z)|^m} dA(z).
		\end{eqnarray*}
		By the change of variables $w=\phi(z)=\phi(0)+\lambda z,$ we have
		\begin{eqnarray*}
			\|C_{\psi,\phi}R_nf\|_{\alpha,m}^2&=&\frac{m\alpha^{\frac 2m}}{2\pi \Gamma(\frac 2m)}\int_{\mathbb C}F_{\alpha,m}(w)|R_nf(w)|^2e^{-\alpha |w|^m} dA(w)\\
			&\leq&\|F_{\alpha,m}\|_{\infty}\int_{|w|<r}|R_nf(w)|^2d\mu_{\alpha, m}(w)+\sup_{|w|>r}F_{\alpha,m}(w)\|R_nf\|^2_{\alpha, m}\\
			&\leq&\|F_{\alpha,m}\|_{\infty}\int_{|w|<r}|R_nf(w)|^2d\mu_{\alpha, m}(w)+\sup_{|w|>r}F_{\alpha,m}(w).
		\end{eqnarray*}
		Using Lemma \ref{Lmmess1}, we get
		$$\int_{|w|<r}|R_nf(w)|^2d\mu_{\alpha, m}(w)\leq C \int_{|w|<r}\sum\limits_{j=n}^{\infty} r^j\frac{\alpha^{\frac{j-1}{m}}}{\sqrt{\Gamma\left(\frac{2j+2}{m}\right)}}d\mu_{\alpha, m}(w).$$
		Since $\sum\limits_{j=n}^{\infty} r^j\frac{\alpha^{\frac{j-1}{m}}}{\sqrt{\Gamma\left(\frac{2j+2}{m}\right)}}\to 0$ as $n\to\infty,$ 
		$$\int_{|w|<r}|R_nf(w)|^2d\mu_{\alpha, m}(w)\to 0$$ as $n\to\infty.$ Hence, taking the supremum over $f$ with $\|f\|_{\alpha,m}\leq 1,$ we have 
		$$\|C_{\psi,\phi}R_n\|^2\leq \sup_{|\phi(z)|>r}F_{\alpha,m}(\phi(z))=\sup_{|z|>\frac{r-|\phi(0)|}{|\lambda|}}F_{\alpha,m}(\phi(z)).$$ 
		Letting $r\to\infty,$ we have 
		$$\|C_{\psi,\phi}\|_e \lesssim\lim_{|z|\to\infty}\sup |\psi(z)|^2 e^{\alpha(|\phi(z)|^m-|z|^m)}.$$

		Next we prove the lower estimate for the essential norm. Since $C_{\psi,\phi}$ is bounded, $\phi(z)=\phi(0)+\lambda z,~ |\lambda|\leq1.$ Thus, $|\phi(z)|\to\infty$ as $|z|\to\infty.$
		As $\{\hat k_{z}^{\alpha,m}\}\to 0$ weakly as $|z|\to\infty,$ we see that $\|K \hat k_{\phi(z)}^{\alpha,m}\|_{\alpha,m}^2\to 0$ as $|z|\to\infty.$  
		Therefore, 
		\begin{eqnarray*}
			\|C_{\psi,\phi}-K\|
			&\geq& \lim_{|z|\to\infty}\sup\|(C_{\psi,\phi}-K)\hat k_{\phi(z)}^{\alpha,m}\|_{\alpha,m}\\
			&\geq&\lim_{|z|\to\infty}\sup (\|C_{\psi,\phi}\hat k_{\phi(z)}^{\alpha,m}\|_{\alpha,m}-\|K\hat k_{\phi(z)}^{\alpha,m}\|_{\alpha,m})\\
			&=&\lim_{|z|\to\infty}\sup \|C_{\psi,\phi}\hat k_{\phi(z)}^{\alpha,m}\|_{\alpha,m}.
		\end{eqnarray*}
		Thus, 
		\begin{eqnarray*}
			\|C_{\psi,\phi}\|_{e}^2
			&\geq&\lim_{|z|\to\infty}\sup \|C_{\psi,\phi}\hat k_{\phi(z)}^{\alpha,m}\|_{\alpha,m}^2\\
			&=& \lim_{|z|\to\infty}\sup \|\psi(z)\hat k_z^{\alpha,m}(\phi(z))\|_{\alpha,m}^2\\
			&\geq&\lim_{|z|\to\infty}\sup |\psi(z)|^2\frac{\|k^{\alpha,m}_{\phi(z)}\|^2}{\|k_z^{\alpha,m}\|^2}\\
			&\asymp &\lim_{|z|\to\infty}\sup |\psi(z)|^2\left|\frac{\phi(z)}{z}\right|^{m-2}e^{\alpha(|\phi(z)|^m-|z|^m)}.
		\end{eqnarray*}
		Since $C_{\psi,\phi}$ is bounded, it follows from Theorem \ref{Th_1} that for $|\lambda|\in(0,1],$ 
		$$\lim_{|z|\to\infty}\sup |\psi(z)|^2\left|\frac{\phi(z)}{z}\right|^{m-2}e^{\alpha(|\phi(z)|^m-|z|^m)}\asymp \lim_{|z|\to\infty}\sup |\psi(z)|^2e^{\alpha(|\phi(z)|^m-|z|^m)}.$$ Therefore,
		$$ \|C_{\psi,\phi}\|_{e}^2\gtrsim \lim_{|z|\to\infty}\sup |\psi(z)|^2e^{\alpha(|\phi(z)|^m-|z|^m)}.$$
	\end{proof}

	We conclude this section by proving the characterization of Hilbert-Schmidt weighted composition operators.
	\begin{proof}[Proof of Theorem \ref{Th_4}]
		Let us consider an orthonormal basis $\{e_n\}_{n=0}^\infty$ on $F^2_{\alpha, m}(\mathbb C),$ defined by $e_n(z)=\sqrt{\frac{\alpha^{\frac{2n}{m}}\Gamma(\frac 2m)}{\Gamma{(\frac{2n+2}{m})}}}z^n.$ Then $C_{\psi,\phi}$ is Hilbert-Schmidt on $F^2_{\alpha, m}(\mathbb C)$ if and only if 
		$$\sum\limits_{n=0}^\infty\|C_{\psi,\phi} e_n\|^2_{{\alpha, m}}<\infty.$$ 
		Now,
		\begin{align*}
			\sum\limits_{n=0}^\infty\|C_{\psi,\phi} e_n\|^2_{\alpha, m}
			&=\sum\limits_{n=0}^\infty\frac{m \alpha ^{\frac2m}}{2\pi\Gamma(\frac 2m)}\int_{\mathbb C} |\psi(z)|^2|e_n(\phi(z)|^2 e^{-\alpha |z|^m}dA(z)\\
			&=\sum\limits_{n=0}^\infty\frac{m \alpha ^{\frac2m}}{2\pi\Gamma(\frac 2m)}\int_{\mathbb C} |\psi(z)|^2 \frac{\alpha^{\frac{2n}{m}\Gamma(\frac 2m)}}{\Gamma(\frac{2n+2}{m})} |\phi(z)|^{2n}  e^{-\alpha |z|^m}dA(z)\\
			&=\frac{m \alpha ^{\frac2m}}{2\pi\Gamma(\frac 2m)}\int_{\mathbb C} |\psi(z)|^2 \sum\limits_{n=0}^\infty\frac{\alpha^{\frac{2n}{m}\Gamma(\frac 2m)}}{\Gamma(\frac{2n+2}{m})} |\phi(z)|^{2n}  e^{-\alpha |z|^m}dA(z)\\
			&\asymp  \frac{m^2 \alpha}{4\pi\Gamma(\frac 2m)}\int_{\mathbb C} |\psi(z)|^2 |\phi(z)|^{m-2} e^{\alpha (|\phi(z)|^m-|z|^m)}dA(z).
		\end{align*}
		Therefore, $C_{\psi,\phi}$ is Hilbert-Schmidt on $F^2_{\alpha, m}(\mathbb C)$ if and only if $$\int_{\mathbb C} |\psi(z)|^2 |\phi(z)|^{m-2} e^{\alpha (|\phi(z)|^m-|z|^m)}dA(z)<\infty.$$
	\end{proof}

	\section {Additional observations}\label{S4}
	The purpose of this section is to illustrate the applicability of the main results obtained in the previous sections. We present several examples and special cases, derive explicit criteria for weighted composition operators induced by exponential weights, and obtain corresponding results for composition operators.
	\begin{proposition}\label{pro_1}
		Let $\phi(z)=\phi(0)+\lambda z$ and $\psi(z)=ce^{dz},$ where $c,d \in\mathbb C$ and $c\neq0.$
		\begin{itemize}
			\item[(i)] For $m\in(0,1),$ $C_{\psi,\phi}$ is bounded on $F^2_{\alpha, m}(\mathbb C)$if and only if $|\lambda|\leq1$ and $d=0.$
			\item [(ii)] For $m>1,$~$C_{\psi,\phi}$ is bounded on $F^2_{\alpha, m}(\mathbb C)$ if $|\lambda|<1.$
		\end{itemize}
	\end{proposition}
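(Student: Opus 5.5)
The plan is to reduce everything to Theorem \ref{Th_1}. Note first that $\psi(z)=ce^{dz}$ lies in $F^2_{\alpha,m}(\mathbb C)$ precisely when the integral $\int|e^{dz}|^2e^{-\alpha|z|^m}dA(z)$ converges. Since $|e^{dz}|=e^{\Re(dz)}\le e^{|d||z|}$, this holds for every $d$ when $m>1$. When $m<1$ and $d\neq0$, the integrand grows like $e^{2|d||z|-\alpha|z|^m}$ in a sector around the direction of $\bar d$, so $\psi\notin F^2_{\alpha,m}$. In that case $\psi=C_{\psi,\phi}1$ cannot belong to the space, and $C_{\psi,\phi}$ is not even a well-defined bounded operator. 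This dispatches the necessity of $d=0$ in (i). Given $d=0$, $\psi\equiv c$ is constant, so Theorem \ref{Th_1} applies and boundedness is equivalent to $|\lambda|\le1$ together with $\sup_z e^{\alpha(|\phi(0)+\lambda z|^m-|z|^m)}<\infty$.

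For the sufficiency in (i) with $d=0$ and $|\lambda|\le 1$, I will use the elementary inequality $(a+b)^m\le a^m+b^m$ for $a,b\ge0$, which holds since $0<m<1$. It gives $|\phi(0)+\lambda z|^m\le|\phi(0)|^m+|\lambda|^m|z|^m\le|\phi(0)|^m+|z|^m$. The exponent is therefore bounded by $\alpha|\phi(0)|^m$, and the supremum is finite. Necessity of $|\lambda|\le1$ is already part of Theorem \ref{Th_1}.

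For (ii), fix $m>1$ and $|\lambda|<1$. As noted, $\psi\in F^2_{\alpha,m}$, so it remains to bound $|c|^2e^{2\Re(dz)}e^{\alpha(|\phi(0)+\lambda z|^m-|z|^m)}$. I estimate the exponent by $2|d||z|+\alpha\big((|\phi(0)|+|\lambda||z|)^m-|z|^m\big)$. Since $(|\phi(0)|+|\lambda| r)^m/r^m\to|\lambda|^m<1$, for large $r=|z|$ the exponent is at most $2|d|r-\tfrac{\alpha}{2}(1-|\lambda|^m)r^m$, which tends to $-\infty$ because $m>1$. The expression is continuous and hence bounded on compact sets, so the supremum is finite and Theorem \ref{Th_1} yields boundedness.

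I expect the main obstacle to be the $d\neq0$, $m<1$ direction. There one must argue carefully that failure of $\psi\in F^2_{\alpha,m}$ precludes boundedness, since Theorem \ref{Th_1} presupposes $\psi\in F^2_{\alpha,m}$. The key point is that the constant function $1$ lies in the space, so $\psi=C_{\psi,\phi}1$ would have to lie there as well.
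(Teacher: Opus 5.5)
Your proof is correct. It follows the paper's overall reduction to Theorem~\ref{Th_1}, and part (ii) is essentially the paper's argument: bound the exponent by $2|d||z|-\tfrac{\alpha}{2}(1-|\lambda|^m)|z|^m$ for large $|z|$, then use boundedness on a disc.

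The real difference is in (i).

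\emph{Sufficiency.} The paper splits into two cases. For $|\lambda|<1$ it uses $|\phi(z)|^m-|z|^m=|z|^m(|\lambda|^m-1)+o(|z|^m)$. For $|\lambda|=1$ it uses the expansion $|\phi(z)|^m-|z|^m=m|z|^{m-1}\Re\bigl(\phi(0)/(\lambda e^{i\theta})\bigr)+O(|z|^{m-2})$ to see that the exponent tends to $0$. Your subadditivity inequality $(a+b)^m\le a^m+b^m$ handles every $|\lambda|\le 1$ at once, with no asymptotics, and gives the explicit bound $e^{\alpha|\phi(0)|^m}$. It also works verbatim for $m=1$, where it reduces to the triangle-inequality bound the paper uses later for $C_\phi$.

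\emph{Necessity.} The paper reads off $\psi\in F^2_{\alpha,m}(\mathbb C)$ from Theorem~\ref{Th_1}, although that membership is a standing hypothesis of the theorem, not a conclusion. It also asserts without proof that, for $0<m<1$, $ce^{dz}\in F^2_{\alpha,m}(\mathbb C)$ if and only if $d=0$. Your observation $\psi=C_{\psi,\phi}1$ and your sector estimate around the direction of $\bar d$ supply exactly these two missing justifications. Likewise, in (ii) you verify $\psi\in F^2_{\alpha,m}(\mathbb C)$ before invoking Theorem~\ref{Th_1}, which the paper leaves implicit.
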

	
	\begin{proof}
		(i) Let $0<m<1$ and $C_{\psi,\phi}$ is bounded on $F^2_{\alpha, m}(\mathbb C).$ By Theorem \ref{Th_1}, we have $|\lambda|\leq1$ and $\psi\in F^2_{\alpha, m}(\mathbb C).$ Since $\psi(z)=ce^{dz},$ it follows that $\psi\in F^2_{\alpha, m}(\mathbb C)$ if and only if $d=0.$ Conversely, assume that $|\lambda|\leq1$ and $d=0.$ Then $\psi(z)=c.$ Now, we only show that $$\sup\limits_{z\in\mathbb C}e^{\alpha(|\phi(z)|^m-|z|^m)}<\infty.$$ Since $\phi(z)=\phi(0)+\lambda z,$ we have $\lim\limits_{|z|\to\infty}\frac{|\phi(z)|-|\lambda||z|}{|z|}=0$ and hence
		$$|\phi(z)|^m-|z|^m=|z|^m(|\lambda|^m-1)+o(|z|^m).$$ By the definition of $o(|z|^m),$ for every $\epsilon>0$ there exists $R_0>0$ such that $$|o(|z|^m)|\leq\epsilon|z|^m,~~|z|\geq R_0.$$ Choose $\epsilon=\frac{(1-|\lambda|^m)}{2}.$ Thus, for $|\lambda|<1,$ 
		$$e^{\alpha(|\phi(z)|^m-|z|^m)}\leq e^{-\frac{\alpha}{2}(1-|\lambda|^m)|z|^m}\leq1.$$ 
		It follows that 
		$$\sup\limits_{|z|>R_0}e^{\alpha(|\phi(z)|^m-|z|^m)}<\infty.$$ On the other hand, the function $e^{\alpha(|\phi(z)|^m-|z|^m)}$ is bounded on $\{z:|z|\leq R_0\}$. Thus, $$\sup\limits_{z\in\mathbb C}e^{\alpha(|\phi(z)|^m-|z|^m)}<\infty.$$ For $|\lambda|=1,$ 
		we have  $$|\phi(z)|^m-|z|^m=m|z|^{m-1}\Re\left(\frac{\phi(0)}{\lambda e^{i\theta}}\right)+O(|z|^{m-2}).$$ As $0<m<1$ and $\Re\left(\frac{\phi(0)}{\lambda e^{i\theta}}\right)\leq|\phi(0)|,$ we have $$|z|^{m-1}\Re\left(\frac{\phi(0)}{\lambda e^{i\theta}}\right)\leq |z|^{m-1}|\phi(0)|\to0~~~\text{as}~|z|\to\infty.$$
		Thus, $$\lim\limits_{|z|\to\infty}(|\phi(z)|^m-|z|^m)=0,$$
		and so $$\lim\limits_{|z|\to\infty}e^{\alpha(|\phi(z)|^m-|z|^m)}=1.$$ Therefore, the boundedness of $C_{\psi,\phi}$ follows. \\
		(ii) First consider $|\lambda|<1.$ By Theorem \ref{Th_1}, it is enough to verify that
		$$\sup_{z\in\mathbb C}e^{2\Re(dz)+\alpha(|\phi(z)|^m-|z|^m)}<\infty.$$  We have
		$$|\phi(z)|^m-|z|^m=|z|^m(|\lambda|^m-1)+o(|z|^m).$$ By the similar argument as in (i), we get
		$$e^{2\Re(dz)+\alpha(|\phi(z)|^m-|z|^m)}\leq e^{2|d||z|-\frac{\alpha (1-|\lambda|^m)}{2}|z|^m}\leq e^{\frac{2(m-1)|d|}{m}\left(\frac{4|d|}{\alpha m (1-|\lambda|^m)}\right)^{\frac{1}{m-1}}},~~|z|\geq R_0.$$ It follows that 
		$$\sup\limits_{|z|>R_0}e^{2\Re(dz)+\alpha(|\phi(z)|^m-|z|^m)}<\infty.$$ Also, the function $e^{2\Re(dz)+\alpha(|\phi(z)|^m-|z|^m)}$ is bounded on $\{z:|z|\leq R_0\}$. Thus, $$\sup\limits_{z\in\mathbb C}e^{2\Re(dz)+\alpha(|\phi(z)|^m-|z|^m)}<\infty$$ and the boundedness of $C_{\psi,\phi}$ follows.
	\end{proof}

	\begin{remark}
		(i)~~The converse of Proposition~\ref{pro_1}(ii) does not hold. Indeed, consider the case
		\[
		\phi(0)=0,\qquad d=0,\qquad |\lambda|=1.
		\]
		Then \(\phi(z)=\lambda z\) and \(\psi\) is a constant function. Consequently,
		\[
		|\psi(z)|^2e^{\alpha\left(|\phi(z)|^m-|z|^m\right)}
		=c^2,
		\]
		which is bounded on \(\mathbb{C}\). Hence, \(C_{\psi,\phi}\) is bounded. On the other hand, the condition $|\lambda|=1$ does not guarantee boundedness. For example, consider the case
		$$ \phi(0)=0,~c=1,d\neq0,~ \text{and}~|\lambda|=1.$$ Then $\phi(z)=\lambda z$ and $\psi(z)=e^{dz}.$ Consequently,
		
		\[|\psi(z)|^2e^{\alpha\left(|\phi(z)|^m-|z|^m\right)}
		=e^{2\Re(dz)}.
		\]
		Since $d\neq0,$ choose $z=t\frac{\bar d}{|d|},~ t>0.$ Then $\Re(dz)=t|d|.$ Thus, we obtain
		$$|\psi(z)|^2e^{\alpha\left(|\phi(z)|^m-|z|^m\right)}=e^{2t|d|}\to\infty ~~\text{as}~~ t \to \infty.$$ Therefore, \(C_{\psi,\phi}\) is not bounded. \\
		One may conjecture from the above examples that, for \(m>1\), if \(C_{\psi,\phi}\) is bounded and \(|\lambda|=1\), then necessarily \(\phi(0)=0\) and \(d=0\). However, this is not true in general. To see this, consider $$m=2,~ \phi(0)\neq0, c=1, d=-\alpha\overline{\phi(0)} ~\text{and} ~|\lambda|=1.$$ Then
		\[
		|\psi(z)|^2=e^{-2\alpha\Re(\overline{\phi(0)}z)},
		\]
		and
		\[
		|\phi(z)|^2-|z|^2
		=2\Re(\overline{\phi(0)}z)+|\phi(0)|^2.
		\]
		Hence,
		\[
		\begin{aligned}
			|\psi(z)|^2e^{\alpha\left(|\phi(z)|^2-|z|^2\right)}
			=e^{-2\alpha\Re(\overline{\phi(0)}z)}
			e^{\alpha\left(2\Re(\overline{\phi(0)}z)+|\phi(0)|^2\right)}
			=e^{\alpha|\phi(0)|^2}.
		\end{aligned}
		\]
		Therefore,
		\[
		\sup_{z\in\mathbb{C}}
		|\psi(z)|^2e^{\alpha\left(|\phi(z)|^2-|z|^2\right)}<\infty.
		\]
		Thus, \(C_{\psi,\phi}\) is bounded on \(F^2_{\alpha,2}(\mathbb{C})\), although \(\phi(0)\neq0\) and \(d\neq0\).\\
		(ii)~~The preceding examples show that, unlike the case $m>1,$ the boundedness of the weighted composition operator on the weighted Fock space corresponding to $m=1$ cannot be characterized solely in terms of the symbol $\phi(z)=\phi(0)+\lambda z.$ In particular, the condition $|\lambda|<1$ does not guarantee boundedness. For example, if 
		$$\phi(z)=\frac{z}{2},~~\psi(z)=e^{\frac{2z}{5}}~~\text{with}~\alpha=1,$$ then 
		\[
		|\psi(z)|^2e^{\alpha\left(|\phi(z)|-|z|\right)}=e^{\frac45 \Re(z)-\frac 12 z},
		\]
		which tends to $\infty$ along the positive real axis. Hence $C_{\psi,\phi}$ is not bounded despite the fact that $|\lambda|<1.$ 
		Likewise, the example 
		$$\phi(z)=z+1,~~\psi(z)=e^{-\frac z4}~~\text{with}~\alpha=1,$$ shows that boundedness may also fail when $|\lambda|=1.$ Therefore, in the case $m=1$, no characterization of boundedness can be expressed solely in terms of the parameter $|\lambda|.$
	\end{remark}

	\begin{proposition}\label{pro_2}
		Let $\phi(z)=\phi(0)+\lambda z$ and $\psi(z)=ce^{dz},$ where $c,d \in\mathbb C$ and $c\neq0.$
		\begin{itemize}
			\item [(i)]~For $0<m<1,$ $C_{\psi,\phi}$ is compact on $F^2_{\alpha, m}(\mathbb C)$ if and only if $|\lambda|<1$ and $d=0.$
			\item[(ii)]~For $m>1,$ $C_{\psi,\phi}$ is compact on $F^2_{\alpha, m}(\mathbb C)$ if and only if $|\lambda|<1.$
		\end{itemize}
		
	\end{proposition}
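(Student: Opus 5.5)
The plan is to reduce everything to Theorem \ref{Th_2}. By that theorem, $C_{\psi,\phi}$ is compact if and only if $\psi\in F^2_{\alpha,m}(\mathbb C)$, $|\lambda|<1$, and
\[
\lim_{|z|\to\infty}|\psi(z)|^2e^{\alpha(|\phi(z)|^m-|z|^m)}=\lim_{|z|\to\infty}|c|^2e^{2\Re(dz)+\alpha(|\phi(z)|^m-|z|^m)}=0 .
\]
The work is therefore to decide, for $\psi(z)=ce^{dz}$, when membership in $F^2_{\alpha,m}(\mathbb C)$ and this limit hold, treating $0<m<1$ and $m>1$ separately.

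\emph{Case (i), $0<m<1$.} For necessity, compactness gives $|\lambda|<1$ by Theorem \ref{Th_2}. Compactness also gives boundedness, so Proposition \ref{pro_1}(i) (equivalently, $\psi\in F^2_{\alpha,m}(\mathbb C)$) forces $d=0$. This is because for $d\neq0$ the function $|e^{dz}|^2e^{-\alpha|z|^m}$ grows like $e^{2|d|t-\alpha t^m}$ along the ray $z=t\bar d/|d|$, and so it is not integrable since $m<1$.

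For sufficiency, take $d=0$ and $|\lambda|<1$. As in the proof of Proposition \ref{pro_1}(i),
\[
|\phi(z)|^m-|z|^m=(|\lambda|^m-1)|z|^m+o(|z|^m).
\]
Hence $e^{\alpha(|\phi(z)|^m-|z|^m)}\le e^{-\frac{\alpha}{2}(1-|\lambda|^m)|z|^m}\to0$. Also $\psi=c$ is a constant and so lies in the space. Theorem \ref{Th_2} then gives compactness.

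\emph{Case (ii), $m>1$.} Necessity, $|\lambda|<1$, is immediate from Theorem \ref{Th_2}. For sufficiency, first note that $\psi=ce^{dz}\in F^2_{\alpha,m}(\mathbb C)$ because $|\psi(z)|^2e^{-\alpha|z|^m}\le|c|^2e^{2|d||z|-\alpha|z|^m}$, which is integrable when $m>1$. Next, with the same $o(|z|^m)$ estimate, for $|z|\ge R_0$,
\[
|\psi(z)|^2e^{\alpha(|\phi(z)|^m-|z|^m)}\le|c|^2e^{2|d||z|-\frac{\alpha}{2}(1-|\lambda|^m)|z|^m}.
\]
The exponent tends to $-\infty$ because $m>1$. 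So the limit is zero, and Theorem \ref{Th_2} yields compactness.

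\emph{Main obstacle.} The only delicate points are the uniform $o(|z|^m)$ control of $|\phi(0)+\lambda z|^m-|z|^m$ when $0<|\lambda|<1$, and the degenerate subcase $\lambda=0$. The $o(|z|^m)$ bound follows from $|\phi(z)|\le|\lambda||z|+|\phi(0)|$ together with $(a+b)^m\le a^m+b^m$ for $m\le1$, or the mean value theorem for $m>1$. When $\lambda=0$, $\phi$ is constant, $C_{\psi,\phi}$ is rank one, and compactness just needs $\psi$ in the space, which was handled above. In particular, for $0<m<1$ the requirement $d=0$ comes precisely from the membership $\psi\in F^2_{\alpha,m}(\mathbb C)$.
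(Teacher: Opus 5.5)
Your proof is correct and follows essentially the same route as the paper. You reduce to Theorem \ref{Th_2}, bound $|\phi(z)|^m-|z|^m$ above by $-\frac12(1-|\lambda|^m)|z|^m$ for large $|z|$, and for $0<m<1$ obtain $d=0$ from $\psi=C_{\psi,\phi}1\in F^2_{\alpha,m}(\mathbb C)$. You are slightly more careful than the paper, since you verify $\psi\in F^2_{\alpha,m}(\mathbb C)$ when $m>1$ and justify the $o(|z|^m)$ bound. One step needs finishing: growth along a single ray does not by itself give non-integrability, so complete the $d\neq0$ argument either on a small sector around that ray or by invoking the pointwise decay \eqref{eq_9898}.
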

	\begin{proof}
		(i)~~Let $0<m<1.$ First consider that $|\lambda|<1$ and $d=0.$ We only show that $$\lim\limits_{|z|\to\infty}e^{\alpha(|\phi(z)|^m-|z|^m)}=0.$$  
		Now, $$e^{\alpha(|\phi(z)|^m-|z|^m)}\leq e^{-\frac{\alpha}{2}(1-|\lambda|^m)|z|^m}.$$ Since $|\lambda|<1,$
		$$\lim\limits_{|z|\to\infty}e^{-\frac{\alpha}{2}(1-|\lambda|^m)|z|^m}=0$$ and hence the result follows. Conversely, let $C_{\psi,\phi}$ is compact on $F^2_{\alpha, m}(\mathbb C).$ Then it follows from Theorem \ref{Th_2} that $|\lambda|<1$ and $d=0.$\\
		(ii)~~Let $m>1.$ Using the same argument as in Proposition \ref{pro_1} (ii), we get 
		$$e^{2\Re(dz)+\alpha(|\phi(z)|^m-|z|^m)}\leq e^{2|d||z|-\frac{\alpha (1-|\lambda|^m)}{2}|z|^m}.$$ Thus, 
		$$\lim\limits_{|z|\to\infty}e^{2\Re(dz)+\alpha(|\phi(z)|^m-|z|^m)}=0$$
		and so $C_{\psi,\phi}$ is compact on $F^2_{\alpha, m}(\mathbb C).$  The converse is an immediate consequence of Theorem~\ref{Th_2}.
	\end{proof}
	
	\begin{remark}
		For the case $m=1,$ consider the weighted composition operator
		$C_{\psi,\phi}$ on $F^2_{1,1}(\mathbb C)$, where
		\[
		\phi(z)=\frac{z}{2}, \qquad \psi(z)=e^{z/4},
		\] with $\alpha=1.$ Here, $|\lambda|<1.$
		By the boundedness characterization, $C_{\psi,\phi}$ is bounded on
		$F^2_{1,1}(\mathbb C).$ However, $C_{\psi,\phi}$ is not compact since
		\[
		\lim_{t\to\infty} |\psi(t)|^2
		e^{|\phi(t)|-|t|}
		=
		\lim_{t\to\infty}
		e^{t/2+t/2-t}
		=1\neq0.
		\]
		Therefore, this example shows that for $m=1,$ the condition
		$|\lambda|<1$ alone is not sufficient to guarantee the compactness of
		$C_{\psi,\phi}.$ 
	\end{remark}

	For the composition operator, our results can be reduced to the following series of equivalent assertions.
	
	\begin{cor}
		Let $\phi(z)=\phi(0)+\lambda z,$ where $\phi(0), \lambda\in\mathbb C.$ 
		\begin{itemize}
			\item [(i)]For $0<m\leq1,$~$C_{\phi}$ is bounded on $F^2_{\alpha, m}(\mathbb C)$ if and only if $|\lambda|\leq1.$ 
			\item[(ii)]For $m>1,$ $C_{\phi}$ is bounded on $F^2_{\alpha, m}(\mathbb C)$ if and only if either $|\lambda|<1$ or $|\lambda|=1$ with $\phi(0)=0$.
		\end{itemize}
	\end{cor}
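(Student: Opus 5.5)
We apply Theorem \ref{Th_1} with $\psi\equiv 1$. This is legitimate because $1\in F^2_{\alpha,m}(\mathbb C)$ and $1$ is not identically zero. Then $C_\phi$ is bounded if and only if $|\lambda|\le 1$ and
$$S:=\sup_{z\in\mathbb C} e^{\alpha(|\phi(0)+\lambda z|^m-|z|^m)}<\infty,$$
that is, if and only if $|\phi(0)+\lambda z|^m-|z|^m$ is bounded above on $\mathbb C$. We may therefore assume $|\lambda|\le1$ throughout; when $|\lambda|>1$ the operator is unbounded by Theorem \ref{Th_1}, for every $m$.

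\emph{Case $|\lambda|<1$ (any $m>0$).} Since $|\phi(z)|\le |\phi(0)|+|\lambda||z|$, we have
$$|\phi(z)|^m-|z|^m\le |z|^m\bigl((|\lambda|+|\phi(0)|/|z|)^m-1\bigr),$$
and this tends to $-\infty$ as $|z|\to\infty$. On a compact disc the quantity is continuous, so $S<\infty$. (This is the argument already given in Proposition \ref{pro_1} with $d=0$.) Note that $\lambda=0$ is covered here as well.

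\emph{Case $|\lambda|=1$.} Write $\phi(0)=b$ and $|z|=t$. The triangle inequality gives $|b+\lambda z|\le t+|b|$, hence
$$|\phi(z)|^m-t^m\le (t+|b|)^m-t^m .$$
For $0<m\le1$ the function $t\mapsto t^m$ is concave and subadditive, so $(t+|b|)^m-t^m\le |b|^m$. This bound is uniform in $z$, so $S<\infty$ and $C_\phi$ is bounded. Together with the necessity of $|\lambda|\le1$, this proves (i).

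For $m>1$ and $b=0$ the quantity is identically $0$, so $C_\phi$ is bounded. For $m>1$ and $b\ne0$ we need to show unboundedness. The plan is to choose $z=t\,\bar\lambda\, b/|b|$ with $t>0$, so that $\lambda z$ points in the direction of $b$ and $|\phi(z)|=t+|b|$. Then
$$|\phi(z)|^m-|z|^m=(t+|b|)^m-t^m\ge m|b|t^{m-1}\to\infty$$
as $t\to\infty$, using the mean value theorem and $m>1$. Hence $S=\infty$ and Theorem \ref{Th_1} shows $C_\phi$ is unbounded. This gives (ii).

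The main obstacle is this last direction, producing a direction in which the exponent blows up. The explicit choice of ray above handles it cleanly. The only point to check is that the lower bound from the mean value theorem really needs $m>1$, which is precisely what separates (ii) from (i). The remaining cases are routine consequences of Theorem \ref{Th_1}.
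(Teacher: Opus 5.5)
Your proof is correct and follows essentially the same route as the paper. You apply Theorem~\ref{Th_1} with $\psi\equiv 1$ to reduce everything to bounding $|\phi(z)|^m-|z|^m$ from above, settle $|\lambda|\le 1$, $0<m\le 1$ with the triangle inequality, and for $m>1$, $|\lambda|=1$, $\phi(0)\neq0$ test along the same ray $z=t\bar\lambda\phi(0)$, $t>0$, as the paper (note $\lambda^{-1}=\bar\lambda$); your subadditivity bound $(t+|\phi(0)|)^m-t^m\le|\phi(0)|^m$ treats $0<m<1$ and $m=1$ uniformly and is cleaner than the paper's appeal to the asymptotic expansion in Proposition~\ref{pro_1}(i).
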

	\begin{proof}
		(i)~Let $0<m\leq1.$ Suppose that $C_{\phi}$ is bounded on $F^2_{\alpha, m}(\mathbb C).$ Then Theorem \ref{Th_1} implies that $|\lambda|\leq1.$ Conversely, let $|\lambda|\leq1.$ For $0<m<1,$ it follows from Proposition~\ref{pro_1} (i) that $C_\phi$ is bounded on $F^2_{\alpha, m}(\mathbb C)$. Now, for $m=1,$ we have $$|\phi(z)|-|z|\leq|\phi(0)|.$$ Thus,
		$$e^{\alpha(|\phi(z)|-|z|)}\leq e^{\alpha |\phi(0)|},$$ and so $$\sup\limits_{z\in\mathbb C}e^{\alpha(|\phi(z)|-|z|)}<\infty.$$ Therefore, $C_{\phi}$ is bounded on $F^2_{\alpha, m}(\mathbb C).$\\
		(ii)~Let $m>1.$ Suppose $C_{\phi}$ is bounded on $F^2_{\alpha, m}(\mathbb C).$ By Theorem \ref{Th_1}, we get $|\lambda|\leq1.$ For the case $|\lambda|=1,$ assume that $\phi(0)\neq 0.$ Consider $z=t\lambda^{-1}\phi(0),~t>0.$ Then $\phi(z)=(t+1)\phi(0)$ and hence 
		$$|\phi(z)|^m-|z|^m=((t+1)^m-t^m)|\phi(0)|^m.$$ As
		$$\lim\limits_{t\to\infty}\frac{(t+1)^m-t^m}{t^{m-1}}=m,$$ it follows that $(t+1)^m-t^m\asymp t^{m-1}$. Since $m>1,$ we have $t^{m-1}\to\infty$ and therefore $$(t+1)^m-t^m\to\infty~~\quad\text{as}~ t\to\infty.$$ Consequently, $$|\phi(z)|^m-|z|^m\to\infty~~~~\quad\text{as}~~|z|\to\infty,$$ which contradicts the boundedness criterion in Theorem~\ref{Th_1}. Hence, $\phi(0)=0.$ \\
		Conversely, if $|\lambda|<1$ then it follows from Proposition \ref{pro_1} (ii) that $C_{\phi}$ is bounded on $F^2_{\alpha, m}(\mathbb C).$ Again, if $|\lambda|=1$ with $\phi(0)=0$ then $|\phi(z)|=|z|.$ Thus, $$\sup\limits_{z\in\mathbb C}e^{\alpha(|\phi(z)|^m-|z|^m)}=1$$ and therefore, $C_\phi$ is bounded. 
	\end{proof}

	\begin{cor}
		Let $\phi(z)=\phi(0)+\lambda z,$ where $\phi(0), \lambda\in\mathbb C.$ Then the following are equivalent.
		\begin{itemize}
			\item [(i)] $C_{\phi}$ is compact on $F^2_{\alpha, m}(\mathbb C)$.
			\item[(ii)] $C_{\phi}$ is Hilbert-Schmidt on $F^2_{\alpha, m}(\mathbb C).$  
			\item [(iii)] $|\lambda|<1.$
		\end{itemize}
	\end{cor}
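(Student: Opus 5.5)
We prove the cycle (ii)$\Rightarrow$(i)$\Rightarrow$(iii)$\Rightarrow$(ii), taking $\psi\equiv 1$ throughout. Note that $\psi\equiv1\in F^2_{\alpha,m}(\mathbb C)$ and $\psi$ is not identically zero, so Theorems \ref{Th_1}, \ref{Th_2} and \ref{Th_4} all apply to $C_\phi=C_{1,\phi}$.

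The implication (ii)$\Rightarrow$(i) is standard: every Hilbert--Schmidt operator is compact.

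For (i)$\Rightarrow$(iii), suppose $C_\phi$ is compact. Theorem \ref{Th_2}, applied with $\psi\equiv1$, gives $\phi(z)=\phi(0)+\lambda z$ with $|\lambda|<1$.

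The substance is (iii)$\Rightarrow$(ii). By Theorem \ref{Th_4} it suffices to show
\[
I:=\int_{\mathbb C}|\phi(z)|^{m-2}e^{\alpha(|\phi(z)|^m-|z|^m)}\,dA(z)<\infty .
\]

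If $\lambda=0$, then $C_\phi$ has rank one, so it is trivially Hilbert--Schmidt. The integrand then equals $|\phi(0)|^{m-2}e^{\alpha|\phi(0)|^m}e^{-\alpha|z|^m}$, which is integrable when $\phi(0)\neq0$. When $\phi(0)=0$ and $m<2$ the weight $|\phi|^{m-2}$ is infinite; this degeneracy comes from the asymptotic equivalence used in Theorem \ref{Th_4}. In that case I would argue directly: $\sum_n\|C_\phi e_n\|^2=\|e_0\|^2<\infty$, since $C_\phi e_n=e_n(0)=0$ for $n\ge1$.

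Now let $0<|\lambda|<1$. Put $\delta=\tfrac12(1-|\lambda|^m)>0$. As in the proof of Proposition \ref{pro_1}, $|\phi(z)|=|\lambda||z|+O(1)$, so there is $R_0$ with
\[
|\phi(z)|^m-|z|^m\le -\delta|z|^m \quad\text{and}\quad |\phi(z)|^{m-2}\lesssim |z|^{m-2}\qquad(|z|\ge R_0).
\]
Hence the part of $I$ over $|z|\ge R_0$ is bounded by a constant times $\int_{|z|\ge R_0}|z|^{m-2}e^{-\alpha\delta|z|^m}\,dA(z)$. In polar coordinates this is a constant times $\int_{R_0}^\infty r^{m-1}e^{-\alpha\delta r^m}\,dr$, which is finite.

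On the disc $|z|\le R_0$ the exponential factor is bounded. The factor $|\phi|^{m-2}$ is bounded when $m\ge2$. When $m<2$ it has at most one singularity, at $z_0=-\phi(0)/\lambda$, where it behaves like $|\lambda|^{m-2}|z-z_0|^{m-2}$. This is locally integrable in the plane because $m-2>-2$.

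The main obstacle is exactly this small-$|\phi|$ behaviour. Theorem \ref{Th_4} rests on the equivalence $E_{2/m,2/m}(x)\asymp x^{(m-2)/2}e^{\alpha\ldots}$, which is only valid for large arguments. To be safe I would redo the Hilbert--Schmidt sum directly: $\sum_n\|C_\phi e_n\|^2=\int E_{2/m,2/m}(\alpha^{2/m}|\phi|^2)\,d\mu_{\alpha,m}$ up to the normalising constant. I would bound $E_{2/m,2/m}(\alpha^{2/m}|\phi|^2)$ by a constant on $\{|\phi|\le1\}$. On $\{|\phi|>1\}$ I would use $E_{2/m,2/m}(\alpha^{2/m}|\phi|^2)\lesssim|\phi|^{m-2}e^{\alpha|\phi|^m}$, which follows from \eqref{neweq2} together with continuity. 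Combined with the decay estimate above, this gives convergence, so $C_\phi$ is Hilbert--Schmidt.
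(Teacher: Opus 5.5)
Your proof is correct and follows essentially the paper's route: compactness forces $|\lambda|<1$, and $|\lambda|<1$ makes the integral of Theorem \ref{Th_4} finite, since the integrand is bounded at infinity by $|z|^{m-2}e^{-\alpha\delta|z|^m}$ and is locally integrable near the zero of $\phi$. The paper obtains $|\lambda|<1$ through Proposition \ref{pro_2} plus a separate check for $m=1$, whereas your cycle gets it directly from Theorem \ref{Th_2} for all $m$. Your extra care fills real gaps: the paper silently assumes $\phi(0)\neq 0$ when $\lambda=0$ and treats the $\asymp$ in Theorem \ref{Th_4} as holding globally. Your rank-one argument for $\phi\equiv 0$ and your direct Mittag-Leffler bound on $\{|\phi|\le 1\}$ close exactly those gaps.
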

	
	\begin{proof}
		The equivalence $(i)\Longleftrightarrow(iii)$ follows immediately from Proposition \ref{pro_2} for $m\neq1$. For $m=1,$ suppose that $C_{\phi}$ is compact on $F^2_{\alpha, m}(\mathbb C)$. Then $|\lambda|\leq1.$ We claim that $|\lambda|<1.$ Indeed, if $|\lambda|=1,$ then $|\phi(z)|-|z|\geq-|\phi(0)|.$ Thus, 
		$$e^{\alpha(|\phi(z)|-|z|)}\geq e^{-\alpha|\phi(0)|}>0,$$ which contradicts the compactness criterion. Conversely, suppose that $|\lambda|<1.$ Then 
		$$|\phi(z)|-|z|\leq\phi(0)-(1-|\lambda|)|z|.$$ Hence, 
		$$e^{\alpha(|\phi(z)|-|z|)}\leq e^{\alpha(\phi(0)-(1-|\lambda|)|z|)}\to0\quad\quad \text{as}~|z|\to\infty.$$
		Therefore, $C_{\phi}$ is compact on $F^2_{\alpha, m}(\mathbb C)$.
		Moreover, since every Hilbert--Schmidt operator is compact, we have
		$(ii)\Longrightarrow(i).$
		
		It remains to prove that $(i)\Longrightarrow(ii)$.
		Assume that $C_{\phi}$ is compact on $F^2_{\alpha, m}(\mathbb C)$. By $(i)\Longleftrightarrow(iii)$, we have $|\lambda|<1$. If $\lambda=0$ then $\phi(z)=\phi(0)\neq 0$ and 
		$$\int_{\mathbb C} |\phi(z)|^{m-2} e^{\alpha ({|\phi(z)|^m-|z|^m})}dA(z)=|\phi(0)|^{m-2}e^{\alpha|\phi(0)|^m}
		\int_{\mathbb C}e^{-\alpha|z|^m}\,dA(z)<\infty.$$ Hence, $C_{\phi}$ is Hilbert-Schmidt. Now assume that $\lambda\neq 0.$ Since $\phi(z)=\phi(0)+\lambda z,$ $|\phi(z)|\asymp |\lambda| |z|.$ Then there exist positive constants $C$ and $R_0$ such that $$|\phi(z)|^{m-2}\leq C|\lambda|^{m-2} |z|^{m-2},~~\quad|z|>R_0.$$ We have
		$$|\phi(z)|^m-|z|^m=|z|^m(|\lambda|^m-1)+o(|z|^m).$$ By the definition of $o(|z|^m),$ for every $\epsilon>0$ there exists $R_0>0$ such that $$|o(|z|^m)|\leq\epsilon|z|^m,~~|z|\geq R_1.$$ Choose $\epsilon=\frac{(1-|\lambda|^m)}{2}.$ Thus,
		$$e^{\alpha(|\phi(z)|^m-|z|^m)}\leq e^{-\frac{\alpha}{2}(1-|\lambda|^m)|z|^m},\quad |z|>R_1.$$
		Hence, 
		$$|\phi(z)|^{m-2} e^{\alpha ({|\phi(z)|^m-|z|^m})}\leq (C|\lambda|)^{m-2}|z|^{m-2}e^{-\frac{\alpha}{2}(1-|\lambda|^m)|z|^m},\quad |z|>R=\max\{R_0, R_1\}.$$
		Take $c_0=\frac{\alpha}{2}(1-|\lambda|^m)|>0.$ Now,
		$$\int_{|z|>R}|z|^{m-2}e^{-c_0|z|^m}dA(z)=\frac{2\pi}{c_0m}e^{-c_0R^m}.$$ It follows that 
		$$\int_{|z|>R} |\phi(z)|^{m-2} e^{\alpha ({|\phi(z)|^m-|z|^m})}dA(z)<\infty.$$ On the other hand, the integrand is locally integrable on the disk $\{z:|z|\leq R\}.$ Therefore, $$\int_{\mathbb C} |\phi(z)|^{m-2} e^{\alpha ({|\phi(z)|^m-|z|^m})}dA(z)<\infty.$$
	\end{proof}

	\section{Examples}\label{S5}
	
	In this section, we provide examples showing that weighted composition operators may have boundedness, compactness, or the Hilbert–Schmidt property on the classical Fock space but not on weighted Fock spaces, and conversely, may have these properties on weighted spaces but not on the classical one.

	\begin{table}[h]
		\centering
		
		\textbf{Table 1}
		
		\vspace{0.3cm}
		\renewcommand{\arraystretch}{1.5}
		\begin{tabular}{|c|c|c|c|c|}
			\hline
			\textbf{Functions} & \textbf{m} & $\mathbf{Bounded}$ & $\mathbf{Compact}$ & $\mathbf{Hilbert-Schmidt}$ \\
			\Xhline{1.2pt}
			
			\multirow{3}{*}{$
				\begin{aligned}
					\phi(z)&=\frac{z}{2},\\
					\psi(z)&=e^{z^3}
				\end{aligned}
				$}
			& $\frac{1}{2}$ & $\times$ & $\times$ & $\times$ \\ \cline{2-5}
			& 2 & $\times$ & $\times$ & $\times$ \\ \cline{2-5}
			& 4 & $\checkmark$ & $\checkmark$ & $\checkmark$ \\
			\Xhline{1.2pt}
			
			\multirow{3}{*}{$
				\begin{aligned}
					\phi(z)&=\frac{z}{2},\\
					\psi(z)&=e^{z^2}
				\end{aligned}
				$}
			& $\frac{1}{2}$ & $\times$ & $\times$ & $\times$ \\ \cline{2-5}
			& 2 & $\checkmark$ & $\checkmark$ & $\checkmark$ \\ \cline{2-5}
			& 4 & $\checkmark$ & $\checkmark$ & $\checkmark$ \\
			\Xhline{1.2pt}
			
			\multirow{3}{*}{$
				\begin{aligned}
					\phi(z)&=z+1,\\
					\psi(z)&=e^{-z}
				\end{aligned}
				$}
			& $\frac{1}{2}$ & $\times$ & $\times$ & $\times$ \\ \cline{2-5}
			& 2 & $\checkmark$ & $\times$ & $\times$ \\ \cline{2-5}
			& 4 & $\times$ & $\times$ & $\times$ \\
			\Xhline{1.2pt}
			
			\multirow{3}{*}{$
				\begin{aligned}
					\phi(z)&=\frac{z}{2},\\
					\psi(z)&=c\neq0
				\end{aligned}
				$}
			& $\frac{1}{2}$ & $\checkmark$ & $\checkmark$ & $\checkmark$ \\ \cline{2-5}
			& 2 & $\checkmark$ & $\checkmark$ & $\checkmark$ \\ \cline{2-5}
			& 4 & $\checkmark$ & $\checkmark$ & $\checkmark$ \\
			\Xhline{1.2pt}
			
		\end{tabular}
		
	\end{table}

	Our examples demonstrate two things: first, boundedness, compactness, and the Hilbert-Schmidt property are not weight-invariant; second, the classical Fock space is exceptional, not the rule. Indeed, operators may possess these properties on the classical space yet lose them on weighted counterparts, and conversely.

	\section{Concluding remarks}\label{S6}
	
	The results obtained in this paper reveal an interesting connection between compactness and the Hilbert--Schmidt property of weighted composition operators on the weighted Fock space $F^2_{\alpha,m}(\mathbb C)$. In particular, for the special class of weighted composition operators induced by
	\[
	\psi(z)=ce^{dz}, \qquad \phi(z)=\lambda z,\qquad c,d\in\mathbb C,\; |\lambda|<1,
	\]
	our compactness and Hilbert-Schmidt characterizations show that every compact weighted composition operator is automatically Hilbert-Schmidt for every $m>0$. 
	This observation naturally raises the following question.
	
	\medskip
	
	\begin{question}\label{q1}
		Let $m>0$. Does every compact weighted composition operator on $F^2_{\alpha,m}(\mathbb C)$ necessarily belong to the Hilbert--Schmidt class?
	\end{question}

	\medskip

	In $F^2_{\alpha}(\mathbb C^n)$, it is known that the operator norm and the essential norm of every non-compact composition operator are equal (see \cite{JPZ_JMAA_17}). Motivated by this result and Theorem~\ref{Th_3}, which provides a characterization of the essential norm of weighted composition operators on $F^2_{\alpha,m}(\mathbb C)$, it is natural to ask the following.
	
	\medskip
	
	\begin{question}\label{q2}
		Let $C_{\psi,\phi}$ be a bounded, non-compact weighted composition operator on $F^2_{\alpha,m}(\mathbb C)$. Does the relation
		$
		\|C_{\psi,\phi}\|
		\asymp
		\|C_{\psi,\phi}\|_e
		$ hold?
	\end{question}

	An affirmative answer to Question \ref{q1} would establish a close relationship between compactness and the Hilbert-Schmidt property for weighted composition operators on  $F^2_{\alpha,m}(\mathbb C)$. Similarly, an affirmative answer to Question \ref{q2} would extend the asymptotic equivalence between the operator norm and the essential norm, known for composition operators on the classical Fock space, to the broader setting of weighted composition operators on $F^2_{\alpha,m}(\mathbb C)$. On the other hand, negative answers to either of these questions would reveal new phenomena and highlight fundamental differences between generalized weighted Fock spaces and the classical Fock space. We hope that these questions will motivate further research on the compactness, Schatten class properties, and essential norm theory of weighted composition operators on generalized weighted Fock spaces.

	\section*{Declarations}	
	\textit{Acknowledgements.} This work originated during a research visit of Dr. Anirban Sen to the Universidad Complutense de Madrid. He sincerely thanks Prof. E. A. Gallardo-Guti\'errez and the department for their generous hospitality and inspiring academic atmosphere, which were invaluable to the progress of this research.
	Dr. Anirban Sen is supported by the Czech Science Foundation (GA CR) grant no. 25-18042S. Miss Somdatta Barik would like to thank UGC, Govt. of India, for the financial support in the form of Senior Research Fellowship under the mentorship of Prof. Kallol Paul.
		\\
		\textit{Author Contributions:} All authors contributed equally to this manuscript and approved the final version.\\
		\textit{Data Availability :} No data was used for the research described in this article.\\
		\textit{Conflict of interest:} The authors declare that they have no conflict of interest.

\end{document}